\numberwithin{equation}{section}
\newtheorem{theorem}{Theorem}[section]
\newtheorem{lemma}[theorem]{Lemma}
\newtheorem{definition}[theorem]{Definition}
\newtheorem{proposition}[theorem]{Proposition}
\newtheorem{corollary}[theorem]{Corollary}
\newcommand{\N}{\mathbb{N}}
\newcommand{\Z}{\mathbb{Z}}
\begin{document}
	\title{{\bf Completely  Li--Yorke chaotic homeomorphisms with positive entropy}}
    \author{Zijie Lin and Xiangdong Ye}
\renewcommand{\thefootnote}{}
\footnotetext{Keywords: Li--Yorke chaos, Proximal, Topological entropy}
\footnotetext{2020 {\it Mathematics Subject Classification.} Primary: 37B05, 37B40}

    \date{}
	\maketitle

\begin{abstract}
	It is an open problem  whether a homeomorphism on a compact metric space satisfying that each proper pair is either positively or negatively Li--Yorke, called completely Li--Yorke chaotic, can have positive  entropy.
	In the present paper, an affirmative answer to this question is given.

In fact, for each homeomorphism $T$ with positive  entropy such that each proper pair is not two-sided asymptotic, a completely  Li--Yorke chaotic homeomorphism with positive entropy associated with the given homeomorphism can be constructed.
\end{abstract}

\section{Introduction}
A pair $(X,T)$ is called a \emph{topological dynamical system} (t.d.s. for short) if $X$ is a compact metric space with a metric $d_{X}$ and $T:X\to X$ is a homeomorphism. A \emph{proper} pair of $X$ is a pair $(x,y)\in X\times X$ with $x\neq y$. Below we will state our main results and the ideas
of the construction.

\subsection{Main results}

Chaotic phenomenon is one of the most important subjects in the study of dynamical systems.
Li and Yorke \cite{LY75} first introduced the notion of Li--Yorke chaos.
Let $(X,T)$ be a t.d.s. A pair $(x,y)$ of $X$ is called a \emph{positively proximal pair for $T$} if $\liminf_{n\to\infty}d_X(T^nx,T^ny)=0$, and called a \emph{positively asymptotic pair for $T$} if $\lim_{n\to\infty}d_X(T^nx,T^ny)=0$.
A proper pair is called a \emph{positively scrambled pair (or Li--Yorke pair) for $T$} if it is positively proximal pair for $T$ but not a positively asymptotic pair for $T$.
A t.d.s. is called \emph{Li--Yorke chaotic} if there is an uncountable \emph{scrambled set} $S\subset X$, that is, for any two distinct points $x,y\in S$, $(x,y)$ is a positively scrambled pair.

Now we give a short list of works related to the study of scrambled sets.
Li and Yorke \cite{LY75} showed that a map on a closed interval with a periodic point of period three is Li--Yorke chaotic.
Misiurewicz \cite{Mis85} proved that there is a continuous map on a closed interval such that it has a scrambled set with full Lebesgue measure.
Mai \cite{Mai97} constructed a continuous map on the non-compact space $(0,1)^n$ such that the whole space becomes a scrambled set.
Huang and Ye \cite{HY01} showed that there is a t.d.s. such that the whole space becomes a scrambled set, called  \emph{positively completely scrambled}.
Such an example can be given by using the graph covers, see Shimomura \cite{Shi16} or by considering the so-called
almost equicontinuous systems with a unique minimal point, see Glasner and Weiss \cite{GW93}.
Answering some question asked in \cite{HY01}, Boro\`{n}ski, Kupka and Oprocha \cite{BKO19} showed that for any intger $n\ge 1$, there is a topological mixing homeomorphism on an $n$-dimensional continuum, which is positively completely scrambled.

Another question asked by the authors in \cite{HY01} is whether a positively completely scrambled homeomorphism can have positive entropy.
As a by-product the question was answered negatively by Blanchard, Host and Ruette \cite{BHR02} who proved that positive entropy implies the existence of a proper positively asymptotic pair. This result shows that if the whole space is a scrambled set, that is, all the proper pairs are positively scrambled pair, then the map should have zero entropy.

This leads us to consider the natural question for positive Li-Yorke pairs or negative Li-Yorke pairs. There are examples that have positive entropy and also have the following property: there is no proper pair which is both positively asymptotic and negatively asymptotic, see for example the work by Lind and Schmidt \cite{LS99}.

A t.d.s. $(X,T)$ is called \emph{completely scrambled} or \emph{completely Li--Yorke chaotic} if each proper pair is either a positively or negatively scrambled pair. 
This means that some of proper pairs are positively scrambled pairs and the complement of them are negatively scrambled pairs.

Unlike the positively completely scrambled situation, we find out that a completely Li--Yorke chaotic homeomorphism with positive entropy
exists, which gives a positive answer to a problem in \cite[Problem 1]{LiYe}. That is,

\begin{theorem}\label{t:main}
	There is a t.d.s. $(X,T)$ such that it is completely Li--Yorke chaotic and has positive topological entropy.
\end{theorem}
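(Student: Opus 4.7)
The plan is to exploit the reduction announced in the abstract: it suffices to supply a seed system $(Y,S)$ with $h(S)>0$ in which no proper pair is two-sided asymptotic, and then perform the general construction that turns such a seed into a completely Li--Yorke chaotic system $(X,T)$ with $h(T)>0$. So the proof splits into these two ingredients.

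For the seed I would take $Y=\mathbb{T}^d$ with $S$ induced by an integer matrix $A$ having no eigenvalue on the unit circle (for instance the cat map when $d=2$). Then $h(S)=\sum_{|\lambda|>1}\log|\lambda|>0$, and if a proper pair $(x,y)$ were two-sided asymptotic, the vector $v=x-y$ would satisfy $A^nv\to 0$ as $n\to\pm\infty$, which forces $v=0$ by hyperbolicity. More generally, the examples of Lind and Schmidt cited in the introduction supply many such seeds.

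For the construction I would realise $(X,T)$ as an extension of $(Y,S)$ obtained by attaching two proximal ``phantom'' pieces. Concretely, build a compact metric space $X=Y\sqcup K_+ \sqcup K_-$ with a homeomorphism $T$ that restricts to $S$ on $Y$ and that acts on each $K_\pm$ as an almost equicontinuous system with a unique minimal fixed point; by the construction of Glasner--Weiss and the work of Huang--Ye, such fibers are intrinsically completely scrambled and have zero entropy. The gluing topology is designed so that every forward $S$-orbit accumulates on every neighbourhood of $K_+$ and every backward $S$-orbit accumulates on every neighbourhood of $K_-$. This design forces (i) every proper pair in $Y\times Y$ to be either positively or negatively proximal in $X$, (ii) every pair with one coordinate a phantom to be proximal towards that phantom, and (iii) every pair inside a single $K_\pm$ to be scrambled by the Glasner--Weiss property of the fiber. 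Since the phantom fibers are almost equicontinuous, the map $X\to Y$ collapsing $K_\pm$ to fixed points is a principal factor map, and standard entropy inequalities give $h(T)=h(S)>0$.

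The hard part will be reconciling the aggressive collapsing in (i) with the need to avoid inadvertently producing a proper pair that is both positively and negatively asymptotic, which would break the completely Li--Yorke chaotic property. The crucial point is exactly the hypothesis on the seed: because no pair in $Y$ is two-sided asymptotic, any proper pair that is accidentally made positively asymptotic by the attachment still fails to be negatively asymptotic, so if the combinatorial schedule of visits to $K_-$ can be arranged to give negative proximality for that pair, the pair is negatively scrambled, and vice versa. Designing the visiting schedule so that for \emph{every} proper pair at least one of the two signs succeeds, while simultaneously maintaining that $T$ is a homeomorphism, that $X$ is a compact metric space, and that the fibers stay almost equicontinuous, is where the bulk of the technical work will concentrate. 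Once this schedule is in place, the completely Li--Yorke chaotic property is verified by splitting into the three families of pairs above, and positive entropy is immediate from the factor map to $(Y,S)$.
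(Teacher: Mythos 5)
Your overall strategy---start from a positive-entropy seed with no two-sided asymptotic proper pairs and upgrade it---matches the paper's reduction, and your seed (a hyperbolic toral automorphism, or the Lind--Schmidt example that the paper actually cites) is fine. But the upgrading step as you describe it cannot work, for a reason that is topological rather than merely technical. You keep $(Y,S)$ sitting inside $(X,T)$ as a genuine subsystem: $Y\subset X$ compact, $T$-invariant, $T|_Y=S$. Then $Y$ is closed in $X$ (a compact subset of a Hausdorff space is closed), so the forward and backward orbit closures of any point of $Y$ stay in $Y$, and no $S$-orbit can accumulate on $K_\pm$. Consequently proximality of a pair $(x,y)\in Y\times Y$ in $X$ is equivalent to proximality in $(Y,S)$ itself. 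A hyperbolic toral automorphism has infinitely many periodic orbits, hence many distinct minimal sets; by the paper's Lemma \ref{l:fixedpoint}, a system in which every pair is positively or negatively proximal has a \emph{unique} minimal point (a fixed point), so your requirement (i) --- that every proper pair of $Y\times Y$ become positively or negatively proximal after the attachment --- is unachievable for any seed with more than one minimal set, no matter how the phantom fibers are glued on. The same objection applies to any construction that preserves the seed as a subsystem.

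The paper's way around this is precisely to \emph{not} keep $(X,T)$ as a subsystem. It builds a new symbolic system $Y\subset (X\cup\{*\})^{\Z}$ in which orbit segments of the seed occur only as block patterns separated, at every scale $p_k$, by long runs of a single isolated fixed symbol $*$; a combinatorial lemma (Lemma \ref{l:AB}) arranges these $*$-blocks so that (a) any two points of $Y$, at any relative shift, share arbitrarily long common $*$-blocks in both time directions, forcing \emph{every} pair to be two-sidedly proximal, while (b) the $*$-blocks occupy density at most $\epsilon_k$ at scale $k$, so entropy survives up to a factor $\prod(1-2\epsilon_k)$, and (c) full $\Z$-orbits of the seed are still legible in the coordinates, so the no-two-sided-asymptotic property transfers. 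Only the orbit-separation data of the seed survives into the new system, not the seed itself. Your proposal also defers the entire "visiting schedule" to unexecuted future work, but the deeper problem is the one above: the schedule you are looking for does not exist within your framework.
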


In fact, such an example can be constructed from any given t.d.s.  with a mild condition, i.e. the system has positive entropy and has no proper pair which are both positively and negatively asymptotic.

\medskip

For the system $(X,T)$ in Theorem \ref{t:main}, if we take an ergodic $T$-invariant probability Borel measure $\mu$ on $(X,T)$ with positive measure-theoretic entropy, then the support of $\mu$ is a transitive subsystem of $(X,T)$ with positive topological entropy, which is also completely Li--Yorke chaotic. Thus we have the following corollary.

\begin{corollary}
	There is a transitive t.d.s. such that it is completely  Li--Yorke chaotic and has positive topological entropy.
\end{corollary}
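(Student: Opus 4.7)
The plan is to extract a transitive subsystem from the system constructed in Theorem \ref{t:main} by taking the support of a suitable ergodic measure. Let $(X,T)$ be the completely Li--Yorke chaotic t.d.s.\ with positive topological entropy supplied by Theorem \ref{t:main}. Since $h_{\text{top}}(T)>0$, the variational principle yields a $T$-invariant Borel probability measure with positive measure-theoretic entropy, and the ergodic decomposition allows us to take this measure to be ergodic; call it $\mu$.

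Next I would set $Y := \text{supp}(\mu)$. Because $T$ is a homeomorphism and $\mu$ is $T$-invariant, $Y$ is a closed $T$-invariant subset of $X$, so $(Y, T|_Y)$ is a subsystem. Two standard facts then give the desired properties of $(Y, T|_Y)$: first, since $\mu$ is ergodic, $\mu$-almost every point of $Y$ has a $T$-orbit that is dense in $Y$, which forces $(Y, T|_Y)$ to be topologically transitive; second, since $\mu$ is a $T|_Y$-invariant Borel probability measure on $Y$ with $h_\mu(T|_Y) = h_\mu(T) > 0$, the variational principle applied to the restricted system gives $h_{\text{top}}(Y, T|_Y) \geq h_\mu(T|_Y) > 0$.

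Finally, I would check that $(Y, T|_Y)$ inherits complete Li--Yorke chaos. This is the routine part of the argument: for any proper pair $(x,y) \in Y \times Y$, the quantities $\liminf_{n\to\infty} d_X(T^n x, T^n y)$, $\lim_{n\to\infty} d_X(T^n x, T^n y)$, and their negative-time counterparts are computed from the same orbit data in $X$, so $(x,y)$ is a positively (resp.\ negatively) scrambled pair for $(Y, T|_Y)$ if and only if it is so for $(X,T)$. Since $(X,T)$ is completely Li--Yorke chaotic and $(x,y)$ is a proper pair in $X \times X$, it is either positively or negatively scrambled, and the same holds in $(Y, T|_Y)$.

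There is no serious obstacle in this argument; the only point requiring any care is the well-known fact that the support of an ergodic measure is a transitive subsystem, and the verification that the scrambled-pair properties are preserved under passage to a closed invariant subset. All other ingredients (variational principle, ergodic decomposition) are classical.
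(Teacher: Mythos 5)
Your proposal is correct and is essentially identical to the paper's argument: the authors likewise take an ergodic invariant measure with positive measure-theoretic entropy (via the variational principle and ergodic decomposition) and pass to its support, noting that this yields a transitive subsystem with positive topological entropy that inherits complete Li--Yorke chaos. No further comment is needed.
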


We remark that if we require that each pair in a t.d.s. $(X,T)$ is either positively or negative recurrent, then the entropy should be zero,
see the recent work by Hochman \cite{Hoch}.

\subsection{Main ideas}

Our main idea is to construct a proximal system from a given one such that this proximal system can carry the information of each orbit in the given system. 

Namely, given a t.d.s. $(X,T)$, we will construct a subsystem of $(X_*^{\Z},\sigma)$, where $X_*$ is the space by adding an isolating fixed point $*$ to $X$ and $\sigma$ is the left shift map.
Choosing a suitable subsequence $\{n_k\}_{k=0}^{\infty}$ of $\N$.
For each $n_k$, we divide the integer interval $\{0,1,2,\dots,n_k-1\}$ into two parts: the ``proximal part" and the ``orbit part".
We put the fixed point $*$ into the proximal part, and put the orbit in $X$ into the orbit part.
And then we obtain a closed subset of $X_*^{\Z}$, which relies only on $n_k$ coordinates.
Finally, collecting all those points and their orbit under the left shift map, we construct a proximal system $Y\subset X_*^{\Z}$ depending on $(X,T)$.

The proximal part is required to intersect with its translation, since every point must be proximal to its shift when $(Y,\sigma)$ is a proximal system.
On the other hand, the orbit part is needed to be large enough so that $(Y,\sigma)$ preserves positive topological entropy.
This means that  the proximal part should be small enough.
We find out that such a partition exists for large enough integer $n$ (see Lemma \ref{l:AB}).
So we can show that the constructed system $Y\subset X_*^{\Z}$ has positive topological entropy when so does $(X,T)$, and all the pairs in $Y$ are positively proximal pairs for both $\sigma$ and $\sigma^{-1}$.

Finally, choose a suitable system as $(X,T)$, that is, a homeomorphism with positive topological entropy satisfying that all the proper pairs are positively non-asymptotic pairs for $T$ or $T^{-1}$.
Since the orbit part contains the information of orbits in $X$ and all the proper pairs in $X$ are positively non-asymptotic pairs for  $T$ or $T^{-1}$, we can show that it is same for $(Y,\sigma)$.
Thus the constructed system $(Y,\sigma)$ is completely Li--Yorke chaotic and has positive topological entropy.

\section{Preliminaries}
For subsets $A,B\subset\Z$ and $n\in\Z$, write
\[
A+B:=\{a+b\in\Z:a\in\Z,b\in\Z\},
\]
\[
n+A:=\{n+a\in\Z:a\in A\}\text{ and }nA:=\{na\in\Z:a\in A\}.
\]

For a t.d.s. $(X,T)$ and $x\in X$, denote by $\mathcal{O}_{+}(x,T)$ the closure of the set $\{T^nx:n\ge 0\}$, called the \emph{positive orbit closure for $T$} of $x$.
And denote by $\mathcal{O}_{\Z}(x,T)$ the closure of the set $\{T^nx:n\in\Z\}$, called the \emph{$\Z$-orbit closure for $T$} of $x$.
A point $x\in X$ is called a \emph{fixed point} if $Tx=x$.
A point $x\in X$ is called a \emph{minimal point} if its orbit closure $\mathcal{O}_{+}(x,T)$ does not contain any proper closed $T$-invariant subset. A fixed point is a minimal point.

For a t.d.s. $(X,T)$, we define the product space
\[
X^\Z:=\{(x_i)_{i\in\Z}:x_i\in X,i\in\Z\}
\]
with the product topology, and the shift map $\sigma: X^\Z\to X^\Z$ by $\sigma(x)=(x_{i+1})_{i\in\Z}$ for $x=(x_i)_{i\in\Z}$.
So $(X^\Z,\sigma)$ is a t.d.s. 

For $x=(x_i)_{i\in\Z}\in X^\Z$ and $p<q\in\Z$, write
\[
x|_{[p,q)}=(x_{p},x_{p+1},\dots ,x_{q-1}).
\]
For $z\in X$, we write $z^\Z:=(z)_{i\in\Z}\in X^\Z$ and $z^{N}=(z,z,\dots,z)\in X^N$ for any integer $N\ge 1$.

\subsection{Pairs in topological dynamical systems}

In this subsection, we give some definitions related to a t.d.s. 

\begin{definition}\label{d:pair}
	Let $(X,T)$ be a t.d.s. 
	A pair $(x,y)\in X\times X$ is called
	\begin{itemize}
		\item \emph{positively proximal for $T$} if $\liminf_{n\to+\infty}d_X(T^nx,T^ny)=0$;
		\item \emph{negatively proximal for $T$} if it is positively proximal for $T^{-1}$;
		\item \emph{positively asymptotic for $T$} if $\lim_{n\to+\infty}d_X(T^nx,T^ny)=0$;
		\item \emph{negatively asymptotic for $T$} if it is positively asymptotic for $T^{-1}$;
		\item \emph{{two-sided asymptotic} for $T$} if it is both positively and negatively asymptotic for $T$;
		\item \emph{positively scrambled (or Li--Yorke) pair for $T$} if it is positively proximal and
		$$\limsup_{n\to+\infty}d_X(T^nx,T^ny)>0;$$
		\item \emph{negatively scrambled (or Li--Yorke) pair for $T$} if it is positively scrambled pair for $T^{-1}$.
		\item \emph{ scrambled (or Li--Yorke) pair for $T$} if it is either a positively or negatively scrambled pair for $T$.
	\end{itemize}
\end{definition}

Notice that a pair $(x,y)$ is not two-sided asymptotic if and only if there is some $\delta>0$ such that $\#\{n\in\Z:d_{X}(T^nx,T^ny)>\delta\}=\infty$ where $\#(\cdot)$ stands for the cardinality of a set.

\subsection{Topological entropy}
In this subsection, we give some notations concerning topological entropy.

Let $(X,T)$ be a t.d.s.
For  $\epsilon>0$ and integer $n>0$, a subset $E\subset X$ is called \emph{$(n,\epsilon)$-separated} if for any distinct $x,y\in E$, there is $0\le n'< n$ such that $d_{X}(T^{n'}x,T^{n'}y)>\epsilon$.
Denote by $s_{n}(X,T,\epsilon)$ the greatest number of all $(n,\epsilon)$-separated sets.
The \emph{topological entropy} of $(X,T)$ can be defined by
\[
h_{\rm top}(X,T)=\lim_{\epsilon\to 0}\limsup_{n\to\infty}\frac{\log s_n(X,T,\epsilon)}{n}=\lim_{\epsilon\to 0}\liminf_{n\to\infty}\frac{\log s_n(X,T,\epsilon)}{n}.
\]

\subsection{Completely  Li--Yorke chaotic homeomorphisms}

In this subsection, we introduce some basic properties of completely  Li--Yorke chaotic homeomorphisms.
The following is well known.

\begin{lemma}\label{l:fixedpoint}
	Let $(X,T)$ be a t.d.s. 
	If any pair $(x,y)$ is either positively or negatively proximal for $T$, then there is a unique fixed point $p$ for $T$. 
	Moreover, $p$ is the unique minimal point of $(X,T)$.
\end{lemma}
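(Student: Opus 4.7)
The plan is to first produce a $T$-fixed point inside an arbitrary minimal subsystem, and then use that to conclude uniqueness of both the fixed point and the minimal set. Existence of a minimal subset $M\subset X$ is free from Zorn's lemma, so I would start there and pick any $x\in M$.

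Next, apply the hypothesis to the pair $(x,Tx)$. In either case---positively or negatively proximal---one obtains a sequence $(n_k)$ in $\Z$ with $|n_k|\to\infty$ and $d_X(T^{n_k}x,T^{n_k+1}x)\to 0$. By compactness I pass to a subsequence so that $T^{n_k}x\to y$ for some $y\in M$. Continuity of $T$ then gives $T^{n_k+1}x=T(T^{n_k}x)\to Ty$, while the asymptotic closeness forces $T^{n_k+1}x\to y$ along the same subsequence as well. Therefore $Ty=y$, so $M$ contains a fixed point, and minimality of $M$ collapses it to $M=\{y\}$.

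The remaining statements then come almost for free. Suppose $p\neq q$ were two $T$-fixed points. Then $d_X(T^np,T^nq)=d_X(p,q)>0$ for every $n\in\Z$, so the pair $(p,q)$ is neither positively nor negatively proximal, contradicting the standing hypothesis; hence the fixed point $p$ is unique. Since every minimal subsystem equals a singleton fixed point by the previous paragraph, and the fixed point is unique, $\{p\}$ is the unique minimal subset.

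The only mild subtlety is that the hypothesis is bundled: for the pair $(x,Tx)$ we do not know in advance whether positive or negative proximality holds. The rescue is that both cases give the same conclusion from $d_X(T^{n_k}x,T^{n_k+1}x)\to 0$, because continuity of $T$ alone---applied to a single convergent subsequence $T^{n_k}x\to y$---produces the fixed-point equation regardless of the sign of $n_k$, so no separate argument using $T^{-1}$ is actually needed.
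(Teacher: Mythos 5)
Your proof is correct and follows essentially the same route as the paper: both extract a fixed point $p$ as a limit point along the proximality sequence of the pair $(x,Tx)$, using continuity of $T$ to get $Tp=p$. The only cosmetic difference is in the uniqueness of the minimal point (the paper notes that any other minimal point would sit at positive distance from $\{p\}$ along its whole orbit, while you first collapse every minimal set to a fixed singleton and then invoke uniqueness of the fixed point), and both versions are fine.
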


\begin{proof}
	The proof is similar to Proposition 2.2 in \cite{HY01}.
	Choose any $x\in X$ and then we have $(x,Tx)$ is a positively or negatively proximal pair for $T$.
	If $(x,Tx)$ is a positively proximal pair for $S\in\{T,T^{-1}\}$, then choose a sequence $\{n_k\}$ of positive integers such that $S^{n_k}x\to p$ for some $p\in X$ and $\lim_{k\to\infty}d_X(S^{n_k}x,S^{n_k+1}x)=0$.
	Thus $Tp=p$, that is, $p$ is a fixed point.
	
	If there is another minimal point $x\neq p$ for $(X,T)$, then $p\notin \mathcal{O}_{\Z}(x,T)$.
	So
	\[
	d_X(p,\mathcal{O}_{\Z}(x,T)):=\inf_{z\in \mathcal{O}_{\Z}(x,T)}d_X(p,z)>0.
	\]
	Thus $(x,p)$ is not a positively nor negatively proximal pair for $T$, a contradiction.
\end{proof}

\begin{proposition}
	Let $(X,T)$ be a t.d.s. 
	If any $(x,y)$ is either a positively or negatively proximal pair for $T$,
then all the pairs of $X$ are both positively and negatively proximal pair for $T$.
\end{proposition}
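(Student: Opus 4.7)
The plan is to exploit the symmetry of the hypothesis together with Lemma~\ref{l:fixedpoint}, and then to argue in the product system $(X\times X,T\times T)$. The first observation is that the statement ``$(x,y)$ is positively or negatively proximal for $T$'' is symmetric in $T$ and $T^{-1}$ (since ``positively proximal for $T^{-1}$'' is by definition ``negatively proximal for $T$''). Hence Lemma~\ref{l:fixedpoint} applies to both $T$ and $T^{-1}$, and by uniqueness the fixed point it produces is the same point $p$, which is the unique minimal point of both $(X,T)$ and $(X,T^{-1})$.

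Given a pair $(x,y)$, I will aim to show $(p,p)$ lies in the forward orbit closure $K:=\overline{\{(T^nx,T^ny):n\ge 0\}}$ in $(X\times X,T\times T)$; this immediately gives positive proximality. By a standard compactness argument (passing to the $\omega$-limit set of $(x,y)$ in the product and invoking Zorn's lemma), $K$ contains a minimal set $M$ of $T\times T$. Pick $(a,b)\in M$. The key step is to rule out $a\ne b$. If $a\ne b$, the hypothesis gives integers $n_k$ with $|n_k|\to\infty$ along which $(T^{n_k}a,T^{n_k}b)$ converges to some diagonal point $(c,c)$. Since $M$ is $\Z$-invariant and closed, $(c,c)\in M$; by minimality $M$ equals the $T\times T$-orbit closure of $(c,c)$, which lies entirely inside the diagonal $\Delta$. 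This contradicts $(a,b)\in M$ with $a\ne b$.

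Therefore $a=b$, so $M\subseteq\Delta$. The projection of $M$ to the first coordinate is a minimal set of $(X,T)$, hence equals $\{p\}$ by Lemma~\ref{l:fixedpoint}, and $M=\{(p,p)\}$. In particular $(p,p)\in K$, so there exist $n_k\to+\infty$ with $T^{n_k}x\to p$ and $T^{n_k}y\to p$, proving $(x,y)$ is positively proximal for $T$. Running exactly the same argument with $T^{-1}$ in place of $T$ (legitimate by the symmetry noted above, and because $p$ is still the unique minimal point) yields that $(x,y)$ is also negatively proximal for $T$.

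The main obstacle is the dichotomy in the hypothesis: since one cannot specify a priori whether a given pair is positively or negatively proximal, a direct subsequence-chasing argument starting from $(x,y)$ is awkward. Passing to a minimal subset of the product orbit closure bypasses this difficulty, because once a minimal set $M$ is in hand, any single diagonal accumulation point — no matter obtained from forward or backward iterates — is enough to collapse $M$ onto the diagonal. Everything else (existence of minimal sets in the product, uniqueness of $p$ for both $T$ and $T^{-1}$) is routine.
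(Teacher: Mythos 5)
Your proof is correct and takes essentially the same route as the paper: both locate a minimal set of $(X\times X,T\times T)$ inside $\overline{\{(T^nx,T^ny):n\ge 0\}}$ and identify it with $\{(p,p)\}$ using Lemma \ref{l:fixedpoint}, then repeat for $T^{-1}$. Your intermediate step ruling out off-diagonal points of $M$ via the proximality hypothesis is harmless but redundant, since the two coordinate projections of $M$ are already minimal sets of $(X,T)$, hence both equal $\{p\}$, which forces $M=\{(p,p)\}$ directly.
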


\begin{proof}
	Fix any $x\neq y\in X$.
	Then there is a minimal point $(z,z')\in \mathcal{O}_{+}((x,y),T\times T)$ for $T\times T$.
	By Lemma \ref{l:fixedpoint}, the fixed point $p\in X$ is the unique minimal point.
	Then the fixed point $(p,p)\in X\times X$ is also the unique minimal point of $(X\times X,T\times T)$.
	Thus we have $(z,z')=(p,p)\in \mathcal{O}_{+}((x,y),T\times T)$, and then $(x,y)$ is a positively proximal pair for $T$.
	By a similar argument, $(x,y)$ is also a negatively proximal pair for $T$.
\end{proof}

\section{Induce a proximal system}\label{s:proximal}

In this section, we will construct an induced  proximal system from a given t.d.s. and show the relationship between their  topological entropy.

Let $(X,T)$ be a t.d.s. 
Let $X_*=X\cup\{*\}$, where $*$ is a point which does not belong to $X$.
We can make $X_*$ to be a compact metric space by extending the metric $d_X$ on $X$ to a metric $d_{X_*}$ defined as:
\[
d_{X_*}(*,*)=0,\,d_{X_*}(x,y)=d_X(x,y),\,d_{X_*}(x,*)=\mathrm{diam}(X)+1\text{ for all }x,y\in X
\]
where $\mathrm{diam}(X)$ is the diameter of $X$ under the metric $d_{X}$.
Extend $T$ to a homomorphism $T_*:X_*\to X_*$ by
\[
T_*(*)=*\text{ and }T_*(x)=T(x)\text{ for }x\in X.
\]
So $(X_*,T_*)$ is also a t.d.s.

\medskip

Next, we will construct a proximal subsystem $(Y,\sigma)$ of $(X^\Z_*,\sigma)$ and show the relationship between the topological entropy of $(X,T)$ and $(Y,\sigma)$.
To do so, the following lemma is crucial.

\begin{lemma}\label{l:AB}
	For any $\epsilon>0$, there is a positive integer $N$ such that for any $n>N$, there is a disjoint union $\{0,1,2,\dots,n-1\}=A\cup B$ with $0,n-1\in B$ satisfying the following statements:
	\begin{itemize}
		\item[(i)] $2\le \#A<n\epsilon$;
		\item[(ii)] for any integer $0\le i<n$, there is $a\in A$ such that $a+i\in A\cup (n+A)$;
		\item[(iii)] for any integer $0< i\le \frac{n}{2}+1$, $\#((A+i)\cap B)\ge \sqrt{\frac{n}2}-1$.
	\end{itemize}
\end{lemma}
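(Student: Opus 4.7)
The plan is to construct $A$ explicitly as the union of a short initial interval and a short arithmetic progression of matching step, sized so that $|A|=O(\sqrt n)$. Fix $k=\lceil\sqrt n\,\rceil$ and $m=\lfloor(n-2)/k\rfloor$, and set
\[
A=\{1,2,\ldots,k\}\cup\{2k,3k,\ldots,mk\},\qquad B=\{0,1,\ldots,n-1\}\setminus A.
\]
Since $mk\le n-2$, we have $0,n-1\in B$, and $|A|=k+m-1$ lies between $2$ and $2\sqrt n+O(1)$, so condition (i) holds once $n$ is large enough that $2\sqrt n+O(1)<n\epsilon$.

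For (ii), fix $i\in\{0,1,\ldots,n-1\}$. The interval $\{1,\ldots,k\}+i$ has length $k$ and so contains exactly one multiple of $k$, say $j_0k$. When $j_0\in\{1,\ldots,m\}$, the preimage $a=j_0k-i\in\{1,\ldots,k\}\subset A$ satisfies $a+i=j_0k\in A$; this settles all $i\in[0,mk-1]$. For the short residual range $i\in[mk,n-1]$ (of length $n-mk\le k+1$) I would take $a=k$ when $i\ge n-k+1$ and $a=2k$ otherwise, and check that $a+i-n$ lands in $\{1,\ldots,k\}\subset A$; the inequality $(m+1)k\ge n-1$, valid by definition of $m$, makes this work.

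For (iii), fix $0<i\le n/2+1$ and write
\[
\#((A+i)\cap B)=\#\{a\in A:a+i\le n-1\}-\#\{a\in A:a+i\in A\}.
\]
The first term loses at most $\lceil i/k\rceil$ elements of the progression part, so it is at least $k+m-i/k-O(1)$. The second term I would decompose by the types of $a$ and $a+i$: interval-to-interval contributes $\max(k-i,0)$; interval-to-progression at most $1$ (a length-$k$ interval meets at most one multiple of $k$); progression-to-interval is $0$ since $a\ge 2k$ forces $a+i>k$; and progression-to-progression vanishes unless $k\mid i$, in which case it is at most $m-i/k-1$. Subtracting gives a lower bound of $\min(k,m)-O(1)\approx\sqrt n$, which exceeds $\sqrt{n/2}-1$ for $n$ large, in both regimes $i<k$ and $i\ge k$.

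The main obstacle is the tension between (ii) and (iii): condition (ii) forces $A-A\pmod n$ to fill $\Z/n\Z$, pulling $A$ toward a difference-cover of density $\gtrsim\sqrt n$; while (iii) asks $A$ to be Sidon-like under every small shift. The interval-plus-progression construction reconciles these because $\{1,\ldots,k\}$ supplies all small differences and $\{2k,\ldots,mk\}$ supplies differences at scale $k$; the most delicate case of (iii) is $i$ itself a multiple of $k$, where progression-to-progression collisions peak, but even then the interval part still contributes $k-O(1)$ surviving points in $B$, comfortably beating $\sqrt{n/2}-1$.
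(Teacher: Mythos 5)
Your construction (an initial interval of length $\approx\sqrt{n}$ together with an arithmetic progression of the same step) is exactly the paper's idea, which takes $A=\{1,\dots,p\}\cup\{p+r_0,2p+r_0,\dots,(q+1)p+r_0\}$ with $p=\lceil\sqrt{n/2}\rceil$ and $r_0=n\bmod p$; the only cosmetic difference is that the paper shifts the progression by $r_0$ so that the wrap-around in (ii) aligns with $n$ automatically, whereas you handle the residual range $[mk,n-1]$ by a short separate case, and both your collision count for (iii) and your size bound for (i) check out. Correct, and essentially the same approach.
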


\begin{proof}
	Fix $\epsilon >0$.
	Choose $N$ such that for any $n>N$,
	\begin{equation}\label{e:n}
	4\le \sqrt{\frac{n}{2}}\text{ and }\sqrt{\frac{2}n}+\frac{3}n<\epsilon.
	\end{equation}
	Fix $n>N$ and set
	\[
	p=\left\lceil\sqrt{\frac{n}{2}}\right\rceil\text{ and }q=\left\lceil\frac{n}{2p}\right\rceil,
	\]
	where $\lceil x\rceil:=\min\{n\in\Z:n\ge x\}$.
	So $n\le2pq$ and
	\begin{equation}\label{e:q}
	\sqrt{n/2}-1<\frac{n}{2(\sqrt{n/2}+1)}<q<\frac{n}{2p}+1\le \sqrt{n/2}+1.
	\end{equation}
	Let $n=k_0p+r_0$ for some integers $k_0>0$ and $0\le r_0<p$.
	So
	\begin{equation}\label{e:n/p}
	2q-3=2(q-1)-1<n/p-1<k_0\le n/p\le 2q.
	\end{equation}
	Let
	\[
	A=\{1,2,\dots,p-1,p\}\cup\{p+r_0,2p+r_0,\dots,qp+r_0,(q+1)p+r_0\}.
	\]
	By the first inequality of (\ref{e:n}) and (\ref{e:q}), we know that $4\le q$.
	And then by (\ref{e:n/p}), we have $k_0>2q-3\ge q+1$.
	So $(q+1)p+r_0<k_0p+r_0=n$.
	Therefore, we have $A\subset\{0,1,2,\dots,n-1\}$ and $n-1\notin A$ by $p>1$.
	The statement (i) holds since
	\[
	\begin{aligned}
	2\le \#A\le p+q+1&< 2\sqrt{\frac{n}2}+3\text{\quad(by (\ref{e:q}))}\\
	&=n\left(\sqrt{\frac{2}n}+\frac{3}n\right)\\
	&<n\epsilon.\text{\quad(by the second inequality of (\ref{e:n}))}
	\end{aligned}
	\]
	
	Let $B=\{0,1,2,\dots,n-1\}\setminus A$.
	So $0,n-1\in B$.

\medskip
	
	To prove (ii), fix $0\le i<n$.
	Let $i=kp+r$ for some integers $k$ and $0\le r<p$.
	So $0\le k\le k_0\le 2q$.
	There are two cases:
	\begin{itemize}
		\item $0\le k\le q$: if $0\le r<r_0$ then
		we have $r_0-r\in A$ and \[i+r_0-r=kp+r_0\in A.\]
		If $r_0\le r<p$, then $p+r_0-r\in A$ and
		\[i+p+r_0-r=(k+1)p+r_0\in A.\]
		\item $q<k\le k_0$: if $r=0$, then we have $0< k_0-k+1\le q$ by $k_0\le 2q$.
		So $(k_0-k+1)p+r_0\in A$ and
		\[(k_0-k+1)p+r_0+i=n+p\in A+n.\]
		
		If $r>0$, then we have $0\le k_0-k<q$ by $k_0\le 2q$.
		So $(k_0-k)p+r_0\in A$ and
		\[(k_0-k)p+r_0+i=n+r\in A+n.\]
	\end{itemize}
	Combining with the above two cases, we prove that (ii) holds.

\medskip
	
	To prove (iii), fix $0<i\le \frac{n}{2}+1$ and divide into two cases.
	
	{\bf Case 1:} if $0<i<p$, then we have $i+jp+r_0\in B$ for all $1\le j\le q$.
	Since $jp+r_0\in A$ for  all $1\le j\le q$, we have
	\[
	\#((A+i)\cap B)\ge q > \sqrt{\frac{n}2}-1
	\]
	where the last inequality holds by (\ref{e:q}).
	
	{\bf Case 2:} if $p\le i\le \frac{n}{2}+1$,
	by the first inequality of (\ref{e:n}), $p<\frac{n}{2}-1$ and then $i+p<n$.
	Since $i+1>p$, then there is at most one element of $i+\{1,2,\dots,p\}$ belongs to $A$.
	Since $\{1,2,\dots,p\}\subset A$, we have
	\[
	\#((A+i)\cap B)\ge p-1 \ge \sqrt{\frac{n}2}-1.
	\]
	
	Combining with the above cases, (iii) is proved.
\end{proof}

\subsection{The construction}

Fix a decreasing sequence $\{\epsilon_k\}_{k=0}^{\infty}$ with $0<\epsilon_k<1/2$ for all $k\ge 0$.
By the above lemma, we inductively define sequences $\{n_k\}_{k=0}^{\infty}$, $\{p_k\}_{k=0}^{\infty}$, and for $k\ge 0$, a disjoint union $\{0,1,2,\dots,n_k-1\}=A_k\cup B_k$ with $0,n_k-1\in B_k$.

For $k=0$, by Lemma \ref{l:AB} for $\epsilon_0$, there are an integer $n_0$ and a disjoint union $\{0,1,2,\dots,n_0-1\}=A_0\cup B_0$ with $0,n_0-1\in B_0$ satisfying (i), (ii) and (iii) in Lemma \ref{l:AB}.
For each integer $k\ge 1$, by Lemma \ref{l:AB} for $\epsilon_k$, there are an integer $n_k>n_{k-1}$ and a disjoint union $\{0,1,2,\dots,n_k-1\}=A_k\cup B_k$ with $0,n_k-1\in B_k$ satisfying (i), (ii) and (iii) in Lemma \ref{l:AB}.
Set $p_{-1}=1$ and $p_{k}=n_kp_{k-1}$ for all $k\ge 0$.

Inductively for $k\ge 0$, assume that we have defined $n_k$, $p_k=n_kp_{k-1}$ (where $p_{-1}=1$) and a disjoint union $\{0,1,2,\dots,n_k-1\}=A_k\cup B_k$ with $0,n_k-1\in B_k$.
Let $b_{-1}=1$ and $b_k=\#B_k$ for $k\ge 0$.
Let $\tau_k:B_k\to \{0,1,\dots,b_k-1\}$ be a strictly increasing bijection.
So for all $k\ge 0$, we have $\tau_k(0)=0$, $\tau_k(n_k-1)=b_k-1$ and
\begin{equation}\label{e:tau_k}
|\tau_k^{-1}(i)-\tau_k^{-1}(j)|\ge |i-j|\text{ for any }i,j\in\{0,1,\dots,b_k-1\}.
\end{equation}

For $k\ge 0$, define $f_k:B_k\to (-(b_k-2),0]\cap\Z$ by
\[
f_k(i)=\left\{
\begin{aligned}
&0,&&i=0,\\
&-(b_k-3),&&i=n_k-1,\\
&\tau_k(i)-(b_k-2),&&otherwise.
\end{aligned}
\right.
\]
More explicitly, see Figure \ref{f:f_k}.
\begin{figure}[h]
	\begin{center}
		\includegraphics{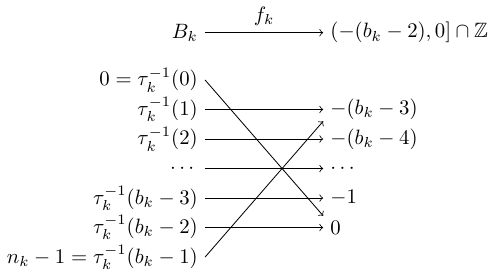}
	\end{center}
	\caption{The map $f_k$}
	\label{f:f_k}
\end{figure}
So
\[
f_k|_{B_k\setminus\{0,n_k-1\}}:B_k\setminus\{0,n_k-1\}\to (-(b_k-2),0]\cap\Z
\]
 is a bijection.
Set $b'_{-1}=1$, $b'_0=\#B_0-2$, $I_0=(-b'_0,0]\cap\Z$ and $I_k=f_k(B_k)\# I_{k-1}+I_{k-1}$ for $k\ge 1$.
So each $I_k=(-b'_k,0]\cap \Z$ is an interval of $\Z$ where $b'_k=b'_{k-1}\#f_k(B_k)$ for $k\ge 0$.

For each $k\ge 0$, define $g_k:B_k\times X_*\to X_*$ by
$g_k(i,x)=T_*^{b'_{k-1}f_k(i)}x$.
If we list the $g_k(i,x)$, $i\in B_k$, then the form is
\[
x,T_*^{-b'_{k-1}(b_k-3)}x,T_*^{-b'_{k-1}(b_k-4)}x,\dots,T_*^{-2b'_{k-1}}x,T_*^{-b'_{k-1}}x,x,T_*^{-b'_{k-1}(b_k-3)}x,
\]
which we add the first point of the orbit $T_*^{-b'_{k-1}(b_k-3)}x,\dots,T_*^{-2b'_{k-1}}x,T_*^{-b'_{k-1}}x,x$ to the end of it, and add the last point of the same orbit to the start of it.
For $k\ge 0$, set
\[
Q_k=\{x=(x_i)_{i\in \Z}\in X^\Z_*:\, x_{ip_{k-1}}=g_k(i,x_{0}),i\in B_k\}
\]
and
\[P_{k}=\{x=(x_i)_{i\in \Z}\in X^\Z_*:x_{ip_{k-1}}=*,\,i\in A_{k}\}.
\]
Then for $k\ge 0$, $P_k$ and $Q_k$ are closed subsets of $X_*^\Z$ since $g_k$ is continuous.
Noticing that for $x=(x_i)_{i\in\Z}\in Q_k$, points $x_{ip_{k-1}}$, $i\in B_k$ belong to the orbit of $x_0$, it is shown that one of points $x_{ip_{k-1}}$, $i\in B_k$ is $*$ if and only if so are all of them.

Let $Y_{-1}=X_*^\Z$, and for $k\ge 0$,
\begin{equation}\label{e:Y_k}
Y_k=Y_{k-1}\cap \bigcap_{n\in\Z}\sigma^{np_k}(P_k\cap Q_k)
\end{equation}
and
\begin{equation}\label{e:Y}
Y=\bigcap_{k=0}^{\infty}\bigcup_{r=0}^{p_k-1}\sigma^{-r}Y_k.
\end{equation}
Since $Y_k$ is closed and $\sigma^{p_k}$-invariant,
$\bigcup_{r=0}^{p_k-1}\sigma^{-r}Y_k$ is closed and $\sigma$-invariant. Then $Y$ is closed and $\sigma$-invariant.
So $(Y,\sigma|_{Y})$ is a subsystem of $(X^\Z_*,\sigma)$.

Notice that $*^\Z\in Y$.
Now, we will give another point $x=(x_i)_{i\in\Z}\in Y$, which shows the form of some points in $Y$.

\begin{lemma}\label{l:sample}
	For any $z\in X$, there is $x=(x_i)_{i\in\Z}\in \bigcap_{k=0}^{\infty}Y_k$ with $x_0=z$.
\end{lemma}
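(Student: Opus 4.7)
The plan is to exhibit a point $x = (x_n)_{n \in \Z} \in X_*^{\Z}$ with $x_0 = z$ whose coordinates are defined via a mixed-radix expansion of the index. Concretely, I would set $x_0 = z$, set $x_n = *$ for every $n < 0$, and for $n \geq 1$ run the iterated division $n = q_0 n_0 + r_0$, $q_{j-1} = q_j n_j + r_j$ with $0 \leq r_j < n_j$, which stops at the first $K$ with $q_K = 0$ (guaranteed since the $q_j$ strictly decrease in the positive range). Then set $x_n = *$ if some $r_j \in A_j$, and otherwise $x_n = T_*^{S} z$, where $S = \sum_{j=0}^{K} b'_{j-1} f_j(r_j)$.

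Next, I would unwind the definition $Y_k = Y_{k-1} \cap \bigcap_{n \in \Z} \sigma^{np_k}(P_k \cap Q_k)$ into the concrete statement that $x \in Y_k$ if and only if, for every $m \in \Z$ and every $0 \leq i < n_k$, the coordinate $x_{mp_k + ip_{k-1}}$ equals $*$ when $i \in A_k$ and equals $T_*^{b'_{k-1} f_k(i)} x_{mp_k}$ when $i \in B_k$. This is the level-by-level compatibility the recipe above is designed to satisfy.

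The heart of the proof is a bookkeeping computation with the mixed-radix expansion of $mp_k + ip_{k-1}$ for $0 \leq i < n_k$. Using $p_{k-1} = n_0 n_1 \cdots n_{k-1}$ and $p_k = n_k p_{k-1}$, the iterated division forces $r_j = 0$ for $0 \leq j < k$, $r_k = i$, and for $m \geq 0$ the higher-level digits coincide with those produced by the expansion of $m$ at levels $k+1, k+2, \ldots$. Because $f_j(0) = 0$, the contributions from levels $j < k$ vanish, and the sum splits as $S = b'_{k-1} f_k(i) + S_m$, where $S_m$ is exactly the sum used to define $x_{mp_k}$. Hence for $m \geq 0$ and $i \in B_k$ with no $A$-obstruction above level $k$, the identity $x_{mp_k + ip_{k-1}} = T_*^{b'_{k-1} f_k(i)} x_{mp_k}$ holds; if $i \in A_k$ the algorithm returns $*$ at level $k$; and if an $A_j$-obstruction occurs for some $j > k$, both $x_{mp_k}$ and $x_{mp_k + ip_{k-1}}$ equal $*$, so the identity reduces to $* = *$ using that $T_*$ fixes $*$.

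For $m \leq -1$ the verification is automatic: every index $mp_k + ip_{k-1}$ with $0 \leq i < n_k$ satisfies $mp_k + ip_{k-1} \leq -p_k + (n_k - 1) p_{k-1} = -p_{k-1} < 0$, so by construction both $x_{mp_k + ip_{k-1}}$ and $x_{mp_k}$ equal $*$, and each required identity becomes $* = *$. The only real obstacle is the mixed-radix bookkeeping in the $m \geq 0$ case, which becomes clean once one observes that $f_j(0) = 0$ kills all low-level contributions. The construction then immediately yields $x_0 = z$ and $x \in Y_k$ for every $k \geq 0$, completing the proof.
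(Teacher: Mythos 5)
Your construction is correct, but it takes a genuinely different route from the paper's. The paper builds the point inductively on the growing windows $[-p_k,p_k)$ and, in particular, arranges the negative half of the sequence to carry an actual backward orbit of $z$ (choosing $x_{-p_k}$ with $g_k(n_k-1,x_{-p_k})=x_{-p_{k-1}}$ at each stage), so the sample point is orbit-like on both sides of the origin. You instead give a closed-form mixed-radix formula on the nonnegative side and put $*$ on every negative coordinate. This works because the level-$k$ conditions defining $\bigcap_{n\in\Z}\sigma^{np_k}(P_k\cap Q_k)$ decouple into the windows $[mp_k,(m+1)p_k)$: for $m\le -1$ the whole window lies in the negative axis (your bound $mp_k+(n_k-1)p_{k-1}=-p_{k-1}<0$ is right), and an all-$*$ window satisfies both the $P_k$ and the $Q_k$ constraints since $T_*(*)=*$; for $m\ge 0$ your digit bookkeeping, using $0\in B_j$ and $f_j(0)=0$ so that low-level digits contribute nothing, yields exactly $x_{mp_k+ip_{k-1}}=g_k(i,x_{mp_k})$ in all three subcases. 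Since the lemma only asserts existence of some point of $\bigcap_k Y_k$ over $z$, and its sole downstream use (the entropy count via Lemma \ref{l:phi_k}) reads off coordinates in $J_K\subset[0,p_K)$, nothing is lost by trivializing the negative side. What the paper's version buys is a point that is two-sidedly ``generic'' over the orbit of $z$; what yours buys is an explicit, non-inductive formula and an essentially free verification for $m\le -1$.
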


\begin{proof}
	Fix $z\in X$.
	Let $x_{-1}=x_0=z\in X$.
	For $1\le i<n_0=p_0$, set
	\[
	x_{i}=
	\left\{
	\begin{aligned}
	&*,&&i\in A_0,\\
	&g_0(i,x_0),&&i\in B_0.
	\end{aligned}
	\right.
	\]
	Set $x_{-p_0}\in X$ such that $g_0(n_0-1,x_{-p_0})=x_{-1}$.
	For $-n_0\le i<0$, set
	\[
	x_{i}=
	\left\{
	\begin{aligned}
	&*,&&i+n_0\in A_0,\\
	&g_0(i+n_0,x_{-p_0}),&&i+n_0\in B_0.
	\end{aligned}
	\right.
	\]
	Indeed, to ensure $x\in Y_0$, it is shown that for any $n\in\Z$, all of $x_{np_0+i}$, $0\le i<n_0$ are fixed if one of $x_{np_0+i}$, $i\in B_0$ is defined.
	So for any $n\in\Z$, whenever $x_{np_0}$ have been defined, all of $x_{np_0+i}$, $0\le i<p_0$ are defined.
	Then we have defined $x|_{[-p_0,p_0)}$.
	
	Similarly, to ensure $x\in Y_1$, for any $n\in\Z$,  all of $x_{np_1+ip_0}$, $0\le i<n_1$ are fixed if one of $x_{np_1+ip_0}$, $i\in B_1$ is defined.
	So for $1\le i<n_1$, set
	\[
	x_{ip_0}=
	\left\{
	\begin{aligned}
	&*,&&i\in A_1,\\
	&g_1(i,x_0),&&i\in B_1.
	\end{aligned}
	\right.
	\]
	Set $x_{-p_1}$ such that $ g_1(n_1-1,x_{-p_1})=x_{-p_0}$. For $-n_1\le i<0$, set
	\[
	x_{ip_0}=
	\left\{
	\begin{aligned}
	&*,&&i+n_1\in A_1,\\
	&g_1(i+n_1,x_{-p_1}),&&i+n_1\in B_1.
	\end{aligned}
	\right.
	\]
	So by defining $x_{ip_0}$ for $ip_0\in[-p_1,p_1)$, it has defined $x_{ip_0+j}$ for $0\le j<p_0$ and $ip_0\in[-p_1,p_1)$, and then we have defined $x|_{[-p_1,p_1)}$.
	
	Suppose that $x|_{[-p_k,p_k)}$ has been defined for some $k\ge 0$.
	To ensure $x\in Y_{k+1}$, all of $x_{np_{k+1}+ip_k}$, $0\le i<n_{k+1}$ are fixed if one of $x_{np_{k+1}+ip_k}$, $i\in B_{k+1}$ is defined.
	So for $1\le i<n_{k+1}$, set
	\[
	x_{ip_k}=
	\left\{
	\begin{aligned}
	&*,&&i\in A_{k+1},\\
	&g_{k+1}(i,x_{0}),&&i\in B_{k+1}.
	\end{aligned}
	\right.
	\]
	Set $x_{-p_{k+1}}$ such that $g_{k+1}(n_{k+1}-1,x_{-p_{k+1}})=x_{-p_k}$.
	For $-n_{k+1}\le i<0$, set
	\[
	x_{ip_k}=
	\left\{
	\begin{aligned}
	&*,&&i+n_{k+1}\in A_{k+1},\\
	&g_{k+1}(i+n_{k+1},x_{-p_{k+1}}),&&i+n_{k+1}\in B_{k+1}.
	\end{aligned}
	\right.
	\]
	So by defining $x_{ip_k}$ for $ip_k\in[-p_{k+1},p_{k+1})$, it has defined $x_{ip_k+j}$ for $0\le j<p_k$ and $ip_k\in[-p_{k+1},p_{k+1})$, and then we have defined $x|_{[-p_{k+1},p_{k+1})}$.
	
	By the induction process, we have defined $x=(x_i)_{i\in\Z}\in\bigcap_{k=0}^{\infty}Y_k$.
\end{proof}

The following lemma shows the form of points in $Y$.

\begin{lemma}\label{l:r_kofx}
	Let $x=(x_i)_{i\in\Z}\in Y$.
	If $x_m\neq *$ for some $m\in\Z$, then there is a unique sequence $\{r_k\}_{k=0}^{\infty}$ with  $0\le r_k<p_k$ such that $r_{k+1}\equiv r_{k}\mod p_k$, $k \ge 0$ and
	\[
	x\in \bigcap_{k=0}^{\infty}\sigma^{-r_k}Y_k.
	\]
\end{lemma}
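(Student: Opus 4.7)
The plan is to establish existence, compatibility, and uniqueness of the sequence $\{r_k\}$ in that order. Existence is immediate: since $x\in Y=\bigcap_{k}\bigcup_{r=0}^{p_k-1}\sigma^{-r}Y_k$, for each $k$ one can pick $r_k\in[0,p_k)$ with $\sigma^{r_k}x\in Y_k$. Once uniqueness is in hand, compatibility follows at once from $Y_{k+1}\subset Y_k$ and the $\sigma^{p_k}$-invariance of $Y_k$: $\sigma^{r_{k+1}}x\in Y_k$ gives $\sigma^{r_{k+1}\bmod p_k}x\in Y_k$, and uniqueness at level $k$ forces $r_k=r_{k+1}\bmod p_k$.

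For uniqueness I reduce to the following rigidity claim, to be proved by induction on $k$: if $y\in Y_k$, $\sigma^s y\in Y_k$ with $0<s<p_k$, and $y$ has some non-$*$ coordinate, then a contradiction arises. Applied with $y=\sigma^{r_k}x$ and $s=r_k'-r_k$ for a hypothetical second candidate $r_k'\in[0,p_k)$, the coordinate $y_{m-r_k}=x_m\neq *$ supplies the non-$*$ witness, so the claim yields uniqueness.

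To prove the rigidity claim, I translate by some $\sigma^{np_k}$ so that $y|_{[0,p_k)}$ is non-all-$*$; an easy recursion using $0,n_i-1\in B_i$ at every level $i$ shows that the canonical pattern then has $y_0,y_{p_k-1}\in X$. Hence the slice $(\sigma^s y)|_{[0,p_k)}=y|_{[s,s+p_k)}$ is not all-$*$ and must obey the canonical $Y_k$-pattern. Decomposing $s=s''p_{k-1}+s'$ with $0\le s'<p_{k-1}$ and using the $\sigma^{p_{k-1}}$-invariance of $Y_{k-1}\supset Y_k$, the inductive hypothesis applied to $\sigma^{s''p_{k-1}}y\in Y_{k-1}$ and shift $s'$ forces $s'=0$. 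So $s=s''p_{k-1}$ with $1\le s''<n_k$, and a subblock-level case analysis on whether $s''\in A_k$ or $s''\in B_k$ and whether $y|_{[p_k,2p_k)}$ is all-$*$ reduces everything to the single equation $A_k+s''\equiv A_k\pmod{n_k}$ (the two degenerate subcases are contradicted directly by $n_k-1\in B_k$).

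The main obstacle is then the aperiodicity of $A_k$ modulo $n_k$. I plan to derive this directly from the explicit form $A_k=\{1,\ldots,p\}\cup\{p+r_0,2p+r_0,\ldots,(q+1)p+r_0\}$ supplied by Lemma~\ref{l:AB}: its initial segment $\{1,\ldots,p\}$ (or $\{1,\ldots,p+1\}$ in the marginal case $r_0=1$) is the unique maximal run of consecutive integers in $A_k$, since the remaining elements are spaced $p$ apart, and $A_k\subset\{1,\ldots,n_k-2\}$ rules out wrap-around runs. Any nonzero cyclic shift would therefore send this unique maximal run off itself, contradicting $A_k+s''\equiv A_k\pmod{n_k}$ for $0<s''<n_k$ and closing the induction.
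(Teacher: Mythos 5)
Your argument is sound and reaches the same combinatorial heart as the paper's proof, but it is packaged differently and closes with a different final step. The paper also proceeds by induction on $k$: it first uses $Y_{k+1}\subset Y_k$ to force $r_{k+1}\equiv r_k \bmod p_k$ (so that, as in your reduction, the difference of two candidates at level $k+1$ is automatically a multiple of $p_k$), then locates the blocks of each candidate's partition containing the witness $x_m\neq *$, uses $0,n_k-1\in B_k$ to see that both overlapping blocks of the second candidate are non-all-$*$, and derives the contradiction directly from property (iii) of Lemma~\ref{l:AB}: for $0<i\le n_k/2+1$ one has $(A_k+i)\cap B_k\neq\emptyset$, so some coordinate would have to be simultaneously $*$ and non-$*$ (the case $i>n_k/2$ being handled via $n_k-i$). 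Your formulation as a standalone rigidity claim for $Y_k$, with the clean reduction to the impossibility of $A_k+s''\equiv A_k\pmod{n_k}$, is arguably tidier. The one point you should repair is the last step: you prove aperiodicity of $A_k$ from the explicit description $A_k=\{1,\dots,p\}\cup\{p+r_0,\dots,(q+1)p+r_0\}$, which lives inside the \emph{proof} of Lemma~\ref{l:AB}, whereas the construction in Section~\ref{s:proximal} only records that $A_k,B_k$ satisfy (i)--(iii); strictly speaking the explicit form is not available to you. This is easily fixed without opening that black box: property (iii) gives $(A_k+i)\cap B_k\neq\emptyset$ for $0<i\le n_k/2+1$, i.e.\ some $a\in A_k$ with $a+i\in B_k\subset[0,n_k)$, which already contradicts $A_k+i\equiv A_k\pmod{n_k}$; for larger $i$ replace $i$ by $n_k-i$. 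With that substitution your proof is complete and equivalent in strength to the paper's.
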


\begin{proof}
	Fix $x=(x_i)_{i\in\Z}\in Y$ with $x_m\neq *$ for some $m\in\Z$.
	Then there is a sequence $\{r_k\}_{k=0}^{\infty}$ such that $0\le r_k<p_k$ and $x\in \sigma^{-r_k}Y_k$ for all $k\ge 0$.

	We will prove the uniqueness of $0\le r_k<p_k$ and $r_{k+1}\equiv r_k\mod p_k$ for $k\ge 0$ by induction.
	
	{\bf Step $k=0$:} First, we prove the uniqueness of $r_0$.
	If there are two distinct integers $0\le r_0\neq r'_0<p_0$ such that $x\in \sigma^{-r_0}Y_0$ and $x\in \sigma^{-r'_0}Y_0$, without loss of generality, assume that $r'_0>r_0$.
	Take $m_0\in\Z$ with $r_0+m_0p_0\le m<r_0+(m_0+1)p_0$.
	Since $\sigma^{r_0}x\in Y_0$, we have $m\in r_0+m_0p_0+B_0$ and
	\begin{equation}\label{e:r_0}
	x_i\left\{
	\begin{aligned}
	&=*,&&i\in r_0+m_0p_0+A_0,\\
	&\neq *,&&i\in r_0+m_0p_0+B_0.\\
	\end{aligned}
	\right.
	\end{equation}
	In particular, $x_{r_0+m_0p_0}\neq *$ and $x_{r_0+m_0p_0+p_0-1}\neq *$.
	Since $r'_0+(m_0-1)p_0\le r_0+m_0p_0<r'_0+m_0p_0$ and $r'_0+m_0p_0\le r_0+m_0p_0+p_0-1<r'_0+(m_0+1)p_0$, by $\sigma^{r'_0}x\in Y_0$,
	we have
	\begin{equation}\label{e:r'_0}
	x_i\left\{
	\begin{aligned}
	&=*,&&i\in (r'_0+(m_0-1)p_0+A_0)\cup(r'_0+m_0p_0+A_0),\\
	&\neq *,&&i\in (r'_0+(m_0-1)p_0+B_0)\cup(r'_0+m_0p_0+B_0).\\
	\end{aligned}
	\right.
	\end{equation}
	Depending on whether $r'_0-r_0\le \frac{n_0}{2}$, there are two cases:
	\begin{itemize}
		\item $r'_0-r_0\le \frac{n_0}{2}$:
		By (iii) of Lemma \ref{l:AB}, $(A_0+r'_0-r_0)\cap B_0\neq \emptyset$.
		So
		$$(r'_0+m_0p_0+A_0)\cap (r_0+m_0p_0+B_0)=r_0+m_0p_0+(A_0+r'_0-r_0)\cap B_0\neq \emptyset,$$
		a contradiction to (\ref{e:r_0}) and (\ref{e:r'_0}).
		
		\item $r'_0-r_0>\frac{n_0}{2}$: Then $p_0-(r'_0-r_0)<\frac{n_0}{2}$.
		By (iii) of Lemma \ref{l:AB}, $(A_0+p_0-(r'_0-r_0))\cap B_0\neq \emptyset$.
		So
		$$
		\begin{aligned}
		&(r_0+m_0p_0+A_0)\cap (r'_0+(m_0-1)p_0+B_0)\\
		=&r'_0+(m_0-1)p_0+(A_0+p_0-(r'_0-r_0))\cap B_0\neq \emptyset,
		\end{aligned}
		$$
		a contradiction to (\ref{e:r_0}) and (\ref{e:r'_0}).
	\end{itemize}
	So we can conclude that there is a unique $0\le r_0<p_0$ such that $x\in\sigma^{-r_0}Y_0$.
	
	{\bf Step $k=1$:} Next, we show that $r_1\equiv r_0\mod p_0$ and prove the uniqueness of $r_1$.
	Since $x\in \sigma^{-r_1}Y_1\subset \sigma^{-r_1}Y_0$, by the uniqueness of $r_0$, we have $r_1\equiv r_0\mod p_0$.
	
	If there are two distinct $0\le r_1<r'_1<p_1$ such that $x\in\sigma^{-r_1}Y_1$ and $x\in\sigma^{-r'_1}Y_1$, then $r_1\equiv r'_1\equiv r_0\mod p_0$.
	Take $m_1\in\Z$ with $r_1+m_1p_1\le m<r_1+(m_1+1)p_1$.
	And by $r_0+m_0p_0\le m< r_0+(m_0+1)p_0$ and $r_1\equiv r_0\mod p_0$, we have $[r_0+m_0p_0,r_0+(m_0+1)p_0)\subset[r_1+m_1p_1,r_1+(m_1+1)p_1)$ and $r_0+m_0p_0\in r_1+m_1p_1+p_0\{0,1,2,\dots,n_1-1\}$.
	\begin{figure}[h]
		\begin{center}
			\includegraphics{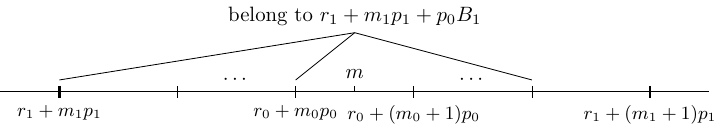}
		\end{center}
	\end{figure}

	By $x_{r_0+m_0p_0}\neq *$, we have $r_0+m_0p_0\in r_1+m_1p_1+p_0B_1$ and
	\begin{equation}\label{e:r_1}
	x_i\left\{
	\begin{aligned}
	&=*,&&i\in r_1+m_1p_1+p_0A_1,\\
	&\neq *,&&i\in r_1+m_1p_1+p_0B_1.\\
	\end{aligned}
	\right.
	\end{equation}
	In particular, $x_{r_1+m_1p_1}\neq *$ and $x_{r_1+(m_1+1)p_1-p_0}\neq *$.
	Since $r'_1+(m_1-1)p_1\le r_1+m_1p_1<r'_1+m_1p_1$, $r'_1+m_1p_1\le r_1+(m_1+1)p_1-p_0<r'_1+(m_1+1)p_1$ and $r_1\equiv r'_1\equiv r_0\mod p_0$, we have $r_1+m_1p_1\in r'_1+(m_1-1)p_1+p_0\{0,1,2,\dots,n_1-1\}$ and $r_1+(m_1+1)p_1-p_0\in r'_1+m_1p_1+p_0\{0,1,2,\dots,n_1-1\}$.
	So we have
	\begin{equation}\label{e:r'_1}
	x_i\left\{
	\begin{aligned}
	&=*,&&i\in (r'_1+(m_1-1)p_1+p_0A_1)\cup(r'_1+m_1p_1+p_0A_1),\\
	&\neq *,&&i\in (r'_1+(m_1-1)p_1+p_0B_1)\cup(r'_1+m_1p_1+p_0B_1).\\
	\end{aligned}
	\right.
	\end{equation}
	Depending on whether $i_1:=\frac{r'_1-r_1}{p_0}\le \frac{n_1}{2}$, there are two cases:
	\begin{itemize}
		\item $i_1\le \frac{n_1}{2}$:
		By (iii) of Lemma \ref{l:AB}, $(A_1+i_1)\cap B_1\neq \emptyset$.
		So
		$$(r'_1+m_1p_1+p_0A_1)\cap (r_1+m_1p_1+p_0B_1)=r_1+m_1p_1+p_0((A_1+i_1)\cap B_1)\neq \emptyset,$$
		a contradiction to (\ref{e:r_1}) and (\ref{e:r'_1}).
		
		\item $i_1>\frac{n_1}{2}$: Then $n_1-i_1<\frac{n_1}{2}$.
		By (iii) of Lemma \ref{l:AB}, $(A_1+n_1-i_1)\cap B_1\neq \emptyset$.
		So
		\[
		\begin{aligned}
		&(r_1+m_1p_1+p_0A_1)\cap (r'_1+(m_1-1)p_1+p_0B_1)\\
		=&r'_1+(m_1-1)p_1+p_0((A_1+n_1-i_1)\cap B_1)\neq \emptyset,
		\end{aligned}
		\]
		a contradiction to (\ref{e:r_1}) and (\ref{e:r'_1}).
	\end{itemize}
	So we can conclude that there is a unique $0\le r_1<p_1$ such that $x\in\sigma^{-r_1}Y_1$.
	
	{\bf Step $k+1$:} Suppose that for some $k\ge 1$, we have proved that for any $1\le K\le k$, $0\le r_K<p_K$ is unique, $r_K\equiv r_{K-1}\mod p_{K-1}$, $r_{K}+m_{K}p_{K}\le m<r_{K}+(m_{K}+1)p_{K}$ and $x_{r_{K}+m_{K}p_{K}}\neq *$.
	We will show that $r_{k+1}\equiv r_k\mod p_k$ and $r_{k+1}$ is unique.
	Since $x\in \sigma^{-r_{k+1}}Y_{k+1}\subset \sigma^{-r_{k+1}}Y_k$, by the uniqueness of $r_k$, we have $r_{k+1}\equiv r_k \mod p_k$.
	
	If there are two distinct $0\le r_{k+1}<r'_{k+1}<p_{k+1}$ such that $x\in \sigma^{-r_{k+1}}Y_{k+1}$ and $x\in \sigma^{-r'_{k+1}}Y_{k+1}$, then $r_{k+1}\equiv r'_{k+1}\equiv r_{k}\mod p_{k}$.
	Take $m_{k+1}\in\Z$ such that $r_{k+1}+m_{k+1}p_{k+1}\le m<r_{k+1}+(m_{k+1}+1)p_{k+1}$.
	And by $r_{k}+m_{k}p_{k}\le m<r_{k}+(m_{k}+1)p_{k}$ and $r_{k+1}\equiv r_{k}\mod p_k$, we have
	$$[r_{k}+m_{k}p_{k},r_{k}+(m_{k}+1)p_{k})\subset [r_{k+1}+m_{k+1}p_{k+1},r_{k+1}+(m_{k+1}+1)p_{k+1})$$
	and $r_{k}+m_{k}p_{k}\in r_{k+1}+m_{k+1}p_{k+1}+p_k\{0,1,2,\dots,n_{k+1}-1\}$.
	By $x_{r_{k}+m_{k}p_{k}}\neq *$, we have $r_{k}+m_{k}p_{k}\in r_{k+1}+m_{k+1}p_{k+1}+p_kB_{k+1}$ and
	\begin{equation}\label{e:r_{k+1}}
	x_i\left\{
	\begin{aligned}
	&=*,&&i\in r_{k+1}+m_{k+1}p_{k+1}+p_kA_{k+1},\\
	&\neq *,&&i\in r_{k+1}+m_{k+1}p_{k+1}+p_kB_{k+1}.\\
	\end{aligned}
	\right.
	\end{equation}
	In particular, $x_{r_{k+1}+m_{k+1}p_{k+1}}\neq *$ and $x_{r_{k+1}+(m_{k+1}+1)p_{k+1}-p_k}\neq *$.
	Since $$r'_{k+1}+(m_{k+1}-1)p_{k+1}\le r_{k+1}+m_{k+1}p_{k+1}<r'_{k+1}+m_{k+1}p_{k+1},$$
	$$r'_{k+1}+m_{k+1}p_{k+1}\le r_{k+1}+(m_{k+1}+1)p_{k+1}-p_{k}<r'_{k+1}+(m_{k+1}+1)p_{k+1}$$
	and $r_{k+1}\equiv r'_{k+1}\equiv r_{k}\mod p_{k}$,
	we have $$r_{k+1}+m_{k+1}p_{k+1}\in r'_{k+1}+(m_{k+1}-1)p_{k+1}+p_{k}\{0,1,2,\dots,n_{k+1}-1\}$$
	and $$r_{k+1}+(m_{k+1}+1)p_{k+1}-p_{k}\in r'_{k+1}+m_{k+1}p_{k+1}+p_{k}\{0,1,2,\dots,n_{k+1}-1\}.$$
	So we have
	\begin{equation}\label{e:r'_{k+1}}
	x_i\left\{
	\begin{aligned}
	&=*,&&i\in (r'_{k+1}+(m_{k+1}-1)p_{k+1}+p_{k}A_{k+1})\cup(r'_{k+1}+m_{k+1}p_{k+1}+p_{k}A_{k+1}),\\
	&\neq *,&&i\in (r'_{k+1}+(m_{k+1}-1)p_{k+1}+p_{k}B_{k+1})\cup(r'_{k+1}+m_{k+1}p_{k+1}+p_{k}B_{k+1}).\\
	\end{aligned}
	\right.
	\end{equation}
	Depending on whether  $i_{k+1}:=\frac{r'_{k+1}-r_{k+1}}{p_{k}}\le \frac{n_{k+1}}{2}$, there are two cases:
	\begin{itemize}
		\item $i_{k+1}\le \frac{n_{k+1}}{2}$:
		By (iii) of Lemma \ref{l:AB}, $(A_{k+1}+i_{k+1})\cap B_{k+1}\neq \emptyset$.
		So
		$$
		\begin{aligned}
		&(r'_{k+1}+m_{k+1}p_{k+1}+p_{k}A_{k+1})\cap (r_{k+1}+m_{k+1}p_{k+1}+p_{k}B_{k+1})\\
		=&r_{k+1}+m_{k+1}p_{k+1}+p_{k}((A_{k+1}+i_{k+1})\cap B_{k+1})\neq \emptyset,
		\end{aligned}
		$$
		a contradiction to (\ref{e:r_{k+1}}) and (\ref{e:r'_{k+1}}).
		
		\item $i_{k+1}>\frac{n_{k+1}}{2}$: Then $n_{k+1}-i_{k+1}<\frac{n_{k+1}}{2}$.
		By (iii) of Lemma \ref{l:AB}, $(A_{k+1}+n_{k+1}-i_{k+1})\cap B_{k+1}\neq \emptyset$.
		So
		$$
		\begin{aligned}
		&(r_{k+1}+m_{k+1}p_{k+1}+p_{k}A_{k+1})\cap (r'_{k+1}+(m_{k+1}-1)p_{k+1}+p_{k}B_{k+1})\\
		=&r'_{k+1}+(m_{k+1}-1)p_{k+1}+p_{k}((A_{k+1}+n_{k+1}-i_{k+1})\cap B_{k+1})\neq \emptyset,
		\end{aligned}
		$$
		a contradiction to (\ref{e:r_{k+1}}) and (\ref{e:r'_{k+1}}).
	\end{itemize}

	Now, we have proved that the sequence $\{r_k\}$ is unique for $0\le r_k<p_k$, $k\ge 0$, and $r_{k+1}\equiv r_{k}\mod p_k$ for all $k\ge 0$.
\end{proof}

\newcommand{\se}[3]{\{#1_#2\}_{#2=#3}^{\infty}}

By Lemma \ref{l:r_kofx}, for any $x\in Y$ with $x_m\neq *$ for some $m\in\Z$, we can always assume that $x\in \bigcap_{k=0}^{\infty}\sigma^{-r_k}Y_k$ for some sequence
$\se rk0$ with $0\le r_k<p_k$ and $r_{k+1}\equiv r_k\mod p_k$ for all $k\ge 0$.

\subsection{Proximality of $Y$}

In this subsection, we show that all pairs of $Y$ are both positively and negatively proximal.
First, we need the following lemma.
\begin{lemma}\label{l:forproximal}
	Let $x=(x_i)_{i\in\Z}\in \bigcap_{k=0}^{\infty}\sigma^{-r_k}Y_k$ for some sequence $\se rk0$ with $0\le r_k<p_k$ and $r_{k+1}\equiv r_k\mod p_k$ for all $k\ge 0$.
	Then for any $k\ge 0$,
	\begin{itemize}
		\item[(i)] $\{i\in\Z:x_i=*\}\supset r_k+p_k\Z+p_{k-1}A_k$;
		\item[(ii)] if
		$x_{r_k+np_k}=*$ for some $n\in\Z$, then
		$x_{r_k+np_k+i}=*$, $i=0,1,2,\dots,p_k-1$.
	\end{itemize}
	Moreover, we have
	\begin{equation}\label{e:proximal}
	\{i\in\Z:x_i=*\}\supset \bigcup_{k=0}^{\infty}(r_k+p_k\Z+p_{k-1}A_k+[0,p_{k-1})\cap\Z).
	\end{equation}
\end{lemma}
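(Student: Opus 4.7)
The plan is to prove (i) and (ii) simultaneously by induction on $k \ge 0$, then derive the ``moreover'' statement as an immediate corollary. The hypotheses on the sequence $\{r_k\}$ feed perfectly into the telescoping structure of the $Y_k$: the congruence $r_k \equiv r_{k-1} \pmod{p_{k-1}}$ guarantees that any ``level-$k$ statement'' about $x$ at an index of the form $r_k + np_k + jp_{k-1}$ can be reinterpreted as a ``level-$(k-1)$ statement'' at the index $r_{k-1} + Mp_{k-1}$ for suitable $M$, which is exactly what lets the inductive step close.

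For part (i), I would simply unpack $x \in \sigma^{-r_k}Y_k$: then $\sigma^{r_k+np_k}x \in P_k \cap Q_k$ for all $n \in \Z$, and the defining condition of $P_k$ immediately gives $x_{r_k + np_k + p_{k-1}a} = *$ for every $a \in A_k$. For the base case $k=0$ of (ii), if $x_{r_0 + np_0} = *$, I combine (i) with the $Q_0$ condition $x_{r_0 + np_0 + i} = g_0(i, x_{r_0 + np_0}) = g_0(i, *)$ for $i \in B_0$, and since $T_*$ fixes $*$ we have $g_0(i,*) = T_*^{b'_{-1}f_0(i)}(*) = *$; together these cover all $0 \le i < n_0 = p_0$.

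The inductive step for (ii) is the main step. Assume (ii) holds at level $k-1$, and fix $n$ with $x_{r_k + np_k} = *$. Using $\sigma^{r_k + np_k}x \in P_k \cap Q_k$ and the vanishing $g_k(i,*) = *$ for all $i \in B_k$, I get
\[
x_{r_k + np_k + ip_{k-1}} = * \quad \text{for every } 0 \le i < n_k.
\]
For each such $i$, the congruence $r_k + np_k + ip_{k-1} \equiv r_{k-1} \pmod{p_{k-1}}$ (which uses $r_k \equiv r_{k-1} \pmod{p_{k-1}}$ and $p_k \in p_{k-1}\Z$) lets me write the index as $r_{k-1} + M p_{k-1}$ for some $M \in \Z$. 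The inductive hypothesis (ii) at level $k-1$, applied to this $M$, then fills in the entire block of length $p_{k-1}$ starting at that index with $*$'s. Taking the union over $0 \le i < n_k$ fills the whole interval $[r_k + np_k,\, r_k + (n+1)p_k)$, proving (ii) at level $k$.

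The ``moreover'' inclusion follows at once: for any $k$ and any $j = r_k + np_k + p_{k-1}a$ with $a \in A_k$, part (i) gives $x_j = *$, and then part (ii) at level $k-1$ (again using $j \equiv r_{k-1} \pmod{p_{k-1}}$) extends this vanishing to the block $j + [0, p_{k-1}) \cap \Z$; taking the union over $k$ yields (\ref{e:proximal}). The only real obstacle is the bookkeeping for the congruences, and in particular observing that $g_k(i,\ast) = \ast$ for all $i \in B_k$, which is what propagates a single $\ast$ at position $r_k + np_k$ through the whole $n_k$-periodic ``skeleton'' at level $k$.
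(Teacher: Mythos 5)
Your proposal is correct and follows essentially the same route as the paper: use the $P_k$-condition for the $*$'s on the level-$k$ skeleton, the $Q_k$-condition together with $T_*(*)=*$ to propagate a single $*$ at $r_k+np_k$ across the whole skeleton, and the congruence $r_k\equiv r_{k-1}\pmod{p_{k-1}}$ to invoke the level-$(k-1)$ statement and fill in the length-$p_{k-1}$ blocks. The only cosmetic difference is that you prove (i) directly for every $k$ and run the induction only for (ii), whereas the paper carries both through one induction; the content is identical.
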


\begin{proof}
	We prove by induction.
	For $k=0$, we have $\sigma^{r_0}x\in Y_0\subset \bigcap_{n\in\Z}\sigma^{np_0}P_0$.
	So for any $n\in \Z$, then $x_{r_0+np_0+i}=*$ for all $i\in A_0$.
	Therefore, we have
	$$\{i\in \Z:x_i=*\}\supset r_0+p_0\Z+A_0.$$
	Moreover, by $\sigma^{r_0}x\in Y_0\subset \bigcap_{n\in\Z}\sigma^{np_0}(P_0\cap Q_0)$, if
	$x_{r_0+np_0}=*$ for some $n\in\Z$, then
	$x_{r_0+np_0+i}=*$, $i=0,1,2,\dots,p_0-1$.
	So we have (i) and (ii) hold for $k=0$.
	
	Suppose that (i) and (ii) hold for some $k\ge 0$.
	Since $\sigma^{r_{k+1}}x\in Y_{k+1}\subset \bigcap_{n\in\Z}\sigma^{np_{k+1}}P_{k+1}$ for any $n\in \Z$,
	then $x_{r_{k+1}+np_{k+1}+ip_{k}}=*$ for all $i\in A_{k+1}$.
	Since (ii) holds for $k$ and $r_{k+1}+np_{k+1}+ip_{k}\in r_{k}+p_{k}\Z$ by $r_{k+1}\equiv r_{k}\mod p_{k}$, we have $x_{r_{k+1}+np_{k+1}+ip_{k}+j}=*$ for all $i\in A_{k+1}$ and $j=0,1,2,\dots,p_{k}-1$.
	Therefore, we have
	$$\{i\in \Z:x_i=*\}\supset r_{k+1}+p_{k+1}\Z+p_{k}A_{k+1}+[0,p_{k})\cap\Z.$$
	Moreover, by $\sigma^{r_{k+1}}x\in Y_{k+1}\subset \bigcap_{n\in\Z}\sigma^{np_{k+1}}(P_{k+1}\cap Q_{k+1})$, if $x_{r_{k+1}+np_{k+1}}=*$ for some $n\in\Z$,
	then $x_{r_{k+1}+np_{k+1}+ip_{k}}=*$ for all $i=0,1,2,\dots,n_{k}-1$.
	So (i) and (ii) hold for $k+1$.
	
	Combining with (i) and (ii) for all $k\ge 0$, (\ref{e:proximal}) is proved.
\end{proof}

In Lemma \ref{l:forproximal}, by (\ref{e:proximal}), we also have
\begin{equation}\label{e:non*}
\{i\in\Z:x_i\neq *\}\subset \bigcap_{k=0}^{\infty}(r_k+p_k\Z+p_{k-1}B_k+[0,p_{k-1})\cap\Z).
\end{equation}

\begin{proposition}\label{p:proximal}
	For any $x,y\in Y$, $(x,y)$ is both positively and negatively proximal.
\end{proposition}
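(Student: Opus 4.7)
The plan is to reduce positive and negative proximality of $(x,y)$ to finding, for every $M\ge 1$, both arbitrarily large positive and arbitrarily large negative integers $n$ with $x_{n+i}=y_{n+i}=*$ for all $|i|\le M$. Since $*^{\Z}\in Y$ is a fixed point of $\sigma$, such an $n$ forces $\sigma^n x$ and $\sigma^n y$ to agree with $*^{\Z}$ on $[-M,M]$, which in the product topology on $X_*^{\Z}$ makes $(\sigma^n x,\sigma^n y)$ arbitrarily close to the diagonal and delivers both $\liminf_{n\to+\infty} d(\sigma^n x,\sigma^n y)=0$ and $\liminf_{n\to-\infty} d(\sigma^n x,\sigma^n y)=0$.

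If one of $x,y$ equals $*^{\Z}$, Lemma~\ref{l:forproximal} applied to the other already delivers such $n$'s: the right-hand side of (\ref{e:proximal}) contains blocks of $p_{k-1}$ consecutive $*$-positions that recur with period $p_k$ in both directions of $\Z$, so for $k$ with $p_{k-1}>2M+1$ every such block supplies a center $n$ of a length-$(2M+1)$ $*$-window, and one may take $|n|$ as large as desired. Otherwise both $x$ and $y$ have some entry in $X$; invoke Lemma~\ref{l:r_kofx} to obtain the sequences $\{r_k\}$ and $\{s_k\}$, fix $M$, choose $k$ with $p_{k-1}>4M$, and write $(r_k-s_k)\bmod p_k = p_{k-1}D + D'$ with $0\le D<n_k$ and $0\le D'<p_{k-1}$.

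The core step is to produce $a_x\in A_k$ and $j_x\in[M,p_{k-1}-M)$ so that, setting $n=r_k+p_{k-1}a_x+j_x+p_k m$ for any $m\in\Z$, the residue $(n-s_k)\bmod p_k$ also has the form $p_{k-1}a_y+j_y$ with $a_y\in A_k$ and $j_y\in[M,p_{k-1}-M)$; by the containment (\ref{e:proximal}) this forces $x_{n+i}=y_{n+i}=*$ for $|i|\le M$. The choice splits on whether $j_x+D'$ carries over $p_{k-1}$: if $D'\le p_{k-1}/2$ one selects $j_x\in[M,p_{k-1}-M-D')$ so that no carry occurs, giving $a_y=(a_x+D)\bmod n_k$ and $j_y=j_x+D'$, and then applies property (ii) of Lemma~\ref{l:AB} with $i=D$ to pick $a_x\in A_k$ with $(a_x+D)\bmod n_k\in A_k$. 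If $D'>p_{k-1}/2$ one selects $j_x\in[M+p_{k-1}-D',p_{k-1})$ so that exactly one carry occurs, giving $a_y=(a_x+D+1)\bmod n_k$ and $j_y=j_x+D'-p_{k-1}$, and then applies property (ii) with $i=(D+1)\bmod n_k$. In both branches the assumption $p_{k-1}>4M$ keeps the interval for $j_x$ nonempty, and sending $m\to\pm\infty$ sweeps $n$ through arbitrarily large positive and arbitrarily large negative values.

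The main obstacle is exactly this two-level carry bookkeeping: since $r_k$ and $s_k$ need not agree modulo $p_{k-1}$, the $p_{k-1}$-long $*$-blocks of $x$ and $y$ are shifted relative to each other and a chosen $n$ may straddle a $*$-block boundary in one of the two sequences. Property (ii) of Lemma~\ref{l:AB}, with its wrap-around tolerance $A_k\cup(n_k+A_k)$, is precisely what simultaneously solves the no-carry and the one-carry branches and keeps the whole scheme uniform in $k$.
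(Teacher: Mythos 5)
Your proposal is correct and follows essentially the same route as the paper: both arguments reduce proximality to finding, in both directions of $\Z$, long blocks where $x$ and $y$ are simultaneously $*$, both extract these blocks from the containment (\ref{e:proximal}) of Lemma \ref{l:forproximal}, and both resolve the relative offset of the two $*$-patterns by applying property (ii) of Lemma \ref{l:AB} with its wrap-around set $A\cup(n+A)$, splitting into the same two cases according to whether the offset within a $p_{k-1}$-block forces a carry (the paper's cases $b<p_k/2$ and $b\ge p_k/2$). Your uniform treatment of the cases $x=*^{\Z}$ and $r_k=s_k$ is a minor packaging difference, not a different argument.
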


\begin{proof}
	Fix $x=(x_i)_{i\in\Z},y=(y_i)_{i\in\Z}\in Y$.
	By Lemma \ref{l:r_kofx}, there are sequences $\se rk0$ and $\se{r'}k0$ such that
	\[
		x\in\bigcap_{k=0}^{\infty}\sigma^{-r_k}Y_k\text{ and }y\in\bigcap_{k=0}^{\infty}\sigma^{-r'_k}Y_k.
	\]
	If $r_k=r'_k$ for all $k\ge 0$, then by (\ref{e:proximal}), $(x,y)$ is both positively and negatively proximal.
	
	If there is some $k_0\ge 0$ such that $r_{k_0}\neq r'_{k_0}$.
	Then by $r_{k+1}\equiv r_k\mod p_k$ and $r'_{k+1}\equiv r'_k\mod p_k$ for all $k\ge 0$, we have $r_{k}\neq r'_{k}$ for all $k\ge k_0$.
	
	Next, we will show that for any $N\ge 0$ there are $n^{+}>N$ and $n^{-}<-N$ such that
	\begin{equation}\label{e:pro}
	x|_{[n^{-}-N,n^{-})}=y|_{[n^{-}-N,n^{-})}=x|_{[n^{+},n^{+}+N)}=y|_{[n^{+},n^{+}+N)}=*^N,
	\end{equation}
	which implies that $(x,y)$ is both positively and negatively proximal.
	
	Fix $N\ge 0$.
	Choose $k>k_0$ with $p_{k}>2N$.
	Without loss of generality, assme that $0\le r_{k+1}<r'_{k+1}<p_{k+1}$.
	Set $r'_{k+1}-r_{k+1}=ap_{k}+b$ for some integers $0\le a<n_{k+1}$ and $0\le b<p_k$.
	We divide $b$ into two cases.
	
	{\bf Case 1:} if $0\le b<p_k/2$, then by (ii) of Lemma \ref{l:AB}, there is $a_1\in A_{k+1}$ such that
	$a_1+a\in A_{k+1}\cup (A_{k+1}+n_{k+1})$.
	So by (\ref{e:proximal}),
	\[
	\begin{aligned}
	\{i\in\Z:y_i=*\}\supset
	&r'_{k+1}+p_{k+1}\Z+a_1p_k+[0,p_k)\cap\Z\\
	=&r_{k+1}+p_{k+1}\Z+(a_1+a)p_k+b+[0,p_k)\cap\Z\\
	\supset & r_{k+1}+p_{k+1}\Z+(a_1+a)p_k+[b,b+N)\cap\Z
	\end{aligned}
	\]
	where the last statement holds by $N<\frac{p_k}2$.
	Since $a_1+a\in A_{k+1}\cup (A_{k+1}+n_{k+1})$ and $b+N<p_k$, we have
	\[
	\begin{aligned}
	\{i\in\Z:x_i=*\}\supset
	&r_{k+1}+p_{k+1}\Z+p_kA_{k+1}+[0,p_k)\cap\Z\quad\text{(by (\ref{e:proximal}))}\\
	\supset &r_{k+1}+p_{k+1}\Z+(a_1+a)p_k+[b,b+N)\cap\Z
	\end{aligned}
	\]
	where the last statement holds by $b<\frac{p_k}2$ and $N<\frac{p_k}2$.
	So
	\[
		\{i\in\Z:x_i=y_i=*\}\supset r_{k+1}+p_{k+1}\Z+(a_1+a)p_k+[b,b+N)\cap\Z,
	\]
	which implies (\ref{e:pro}).
	
	{\bf Case 2:} if $p_k/2\le b<p_k$, then $0<p_k-b<p_k/2$.
	By (ii) of Lemma \ref{l:AB}, there is $a_2\in A_{k+1}$ such that $a_2+(n_{k+1}-1-a)\in A_{k+1}\cup (A_{k+1}+n_{k+1})$.
	So by (\ref{e:proximal}),
	\[
	\begin{aligned}
	\{i\in\Z:x_i=*\}\supset
	&r_{k+1}+p_{k+1}\Z+a_2p_k+[0,p_k)\cap\Z\\
	=&r'_{k+1}+p_{k+1}\Z+(a_2+n_{k+1}-1-a)p_k+(p_k-b)+[0,p_k)\cap\Z\\
	\supset & r'_{k+1}+p_{k+1}\Z+(a_2+n_{k+1}-1-a)p_k+[p_k-b,p_k-b+N)\cap\Z
	\end{aligned}
	\]
	where the last statement holds by $N<\frac{p_k}{2}$.
	Since $p_k-b+N<p_k$ and $a_2+n_{k+1}-1-a\in A_{k+1}\cup (A_{k+1}+n_{k+1})$, we have
	\[
	\begin{aligned}
	\{i\in\Z:y_i=*\}\supset
	&r'_{k+1}+p_{k+1}\Z+p_kA_{k+1}+[0,p_k)\cap\Z\quad\text{(by (\ref{e:proximal}))}\\
	\supset &r'_{k+1}+p_{k+1}\Z+(a_2+n_{k+1}-1-a)p_k+[p_k-b,p_k-b+N)\cap\Z
	\end{aligned}
	\]
	where the last statement holds by $p_k-b<\frac{p_k}{2}$ and $N<\frac{p_k}{2}$.
	So
	\[
	\{i\in\Z:x_i=y_i=*\}\supset r'_{k+1}+p_{k+1}\Z+(a_2+n_{k+1}-1-a)p_k+[p_k-b,p_k-b+N)\cap\Z,
	\]
	which implies (\ref{e:pro}).
\end{proof}

\subsection{Entropy of $Y$}

In this subsection, we show the relationship between the entropy of $X$ and $Y$.
In the proof of Lemma \ref{l:sample}, we can see that $x$ contains the orbit of $x_0$.
Indeed, this also holds for any $x\in Y_k$.
For $k\ge 0$, let $C_k=B_k\setminus\{0,n_k-1\}$.
So $\#C_k=b_k-2$, and $f_k|_{C_k}:C_k\to(-(b_k-2),0]\cap\Z$ is a bijection.
For convenience, we write $f^{-1}_k:=(f_k|_{C_k})^{-1}:(-(b_k-2),0]\cap\Z\to C_k$ for all $k\ge 0$.

For $K\ge 0$, set
\begin{equation}\label{e:J_k}
J_{K}=\Z\cap[0,p_{K})\cap\bigcap_{k=0}^{K}(p_{k}\Z+p_{k-1}C_{k}+[0,p_{k-1}))
.
\end{equation}
So $J_0=C_0$ and for $K\ge 0$,
\[
\begin{aligned}
J_{K+1}=&\Z\cap[0,p_{K+1})\cap\bigcap_{k=0}^{K+1}(p_{k}\Z+p_{k-1}C_{k}+[0,p_{k-1}))\\
=&\Z\cap(p_{K}C_{K+1}+[0,p_{K}))\cap\bigcap_{k=0}^{K}(p_{k}\Z+p_{k-1}C_{k}+[0,p_{k-1}))\\
=&\Z\cap\left(\bigcup_{i\in C_{K+1}}(ip_{K}+[0,p_{K}))\right)\cap\bigcap_{k=0}^{K}(p_{k}\Z+p_{k-1}C_{k}+[0,p_{k-1}))\\
=&\bigcup_{i\in C_{K+1}}\left(ip_{K}+\Z\cap[0,p_{K})\cap\bigcap_{k=0}^{K}(-ip_{K}+p_{k}\Z+p_{k-1}C_{k}+[0,p_{k-1}))\right)\\
=&p_{K}C_{K+1}+\Z\cap[0,p_{K})\cap\bigcap_{k=0}^{K}(p_{k}\Z+p_{k-1}C_{k}+[0,p_{k-1}))
\end{aligned}
\]
where the last equality holds by $p_{K}\in \bigcap_{k=0}^{K}p_{k}\Z$.
So $J_{k+1}=p_{k}C_{k+1}+J_{k}$ for all $k\ge 0$.
Recall that $I_k=(-b'_{k},0]\cap \Z$ and $I_{k+1}=f_{k+1}(B_{k+1})b'_{k}+I_k$  for $k\ge 0$.

\begin{lemma}\label{l:phi_k}
	There are strictly increasing bijections
	$\phi_k: I_k\to J_k$, $k\ge 0$
	such that for all $k\ge 0$,
	\begin{itemize}
		\item[(i)] for any $x\in Y_k$, $x_{\phi_k(n)}=T_*^nx_0$, $n\in I_k$, and
		\item[(ii)] for any $j\in f_{k+1}(B_{k+1})$,  $\phi_{k+1}(jb'_k+I_k)=f^{-1}_{k+1}(j)p_{k}+J_k$.
	\end{itemize}
	
\end{lemma}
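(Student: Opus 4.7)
The plan is to prove the lemma by induction on $k$, guided by the fact that both $I_k$ and $J_k$ come with natural nested block decompositions (dictated by the recursions $I_{k+1} = f_{k+1}(B_{k+1}) b'_k + I_k$ and $J_{k+1} = p_k C_{k+1} + J_k$), and $\phi_k$ should simply be the unique strictly increasing bijection compatible with those decompositions at every level.

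For the base case $k=0$, I set $\phi_0 := (f_0|_{C_0})^{-1}: I_0 \to C_0 = J_0$. Since $f_0|_{C_0}(i) = \tau_0(i) - (b_0-2)$ with $\tau_0$ strictly increasing, $\phi_0$ is a strictly increasing bijection. For (i), any $x \in Y_0 \subset Q_0$ satisfies $x_i = g_0(i, x_0) = T_*^{b'_{-1} f_0(i)} x_0 = T_*^{f_0(i)} x_0$ for $i \in B_0$, and substituting $i = \phi_0(n)$ yields $x_{\phi_0(n)} = T_*^n x_0$. Condition (ii) is built into the inductive step. For that step, assume $\phi_k$ exists. Each $n \in I_{k+1}$ has a unique decomposition $n = j b'_k + m$ with $j \in f_{k+1}(B_{k+1})$ and $m \in I_k$ (the blocks $jb'_k + I_k$ of size $b'_k$ tile $I_{k+1}$), and I define
\[
\phi_{k+1}(j b'_k + m) := f^{-1}_{k+1}(j) p_k + \phi_k(m).
\]
Bijectivity onto $J_{k+1} = p_k C_{k+1} + J_k$ is immediate; strict monotonicity follows because $f^{-1}_{k+1}$ is strictly increasing on integers, so $f^{-1}_{k+1}(j_2) p_k \ge f^{-1}_{k+1}(j_1) p_k + p_k$ whenever $j_1 < j_2$, while $\phi_k(I_k) = J_k \subset [0,p_k)$ keeps each block strictly below the next. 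Property (ii) is the definition.

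The only nontrivial verification is (i) for $k+1$. Fix $x \in Y_{k+1} \subset Y_k$ and $n = j b'_k + m$. Since $Y_k$ is $\sigma^{p_k}$-invariant, the point $y := \sigma^{f^{-1}_{k+1}(j) p_k} x$ lies in $Y_k$, so the inductive hypothesis gives $y_{\phi_k(m)} = T_*^m y_0$, i.e.
\[
x_{f^{-1}_{k+1}(j) p_k + \phi_k(m)} = T_*^m \, x_{f^{-1}_{k+1}(j) p_k}.
\]
Because $x \in Y_{k+1} \subset Q_{k+1}$ and $f^{-1}_{k+1}(j) \in C_{k+1} \subset B_{k+1}$, the defining relation of $Q_{k+1}$ gives $x_{f^{-1}_{k+1}(j) p_k} = g_{k+1}(f^{-1}_{k+1}(j), x_0) = T_*^{b'_k f_{k+1}(f^{-1}_{k+1}(j))} x_0 = T_*^{b'_k j} x_0$, and composing yields $x_{\phi_{k+1}(n)} = T_*^{m + j b'_k} x_0 = T_*^n x_0$.

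The main obstacle, if any, is purely bookkeeping: checking that the multiplicative scaling by $b'_k$ on the $I$-side matches the scaling by $p_k$ on the $J$-side under the bijection $f^{-1}_{k+1}: f_{k+1}(B_{k+1}) \to C_{k+1}$, and that the within-block image $\phi_k(I_k) = J_k \subset [0, p_k)$ is exactly what is needed both for monotonicity and for the reduction $\sigma^{f^{-1}_{k+1}(j)p_k} x \in Y_k$ to propagate the inductive hypothesis cleanly.
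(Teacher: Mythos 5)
Your proof is correct and follows essentially the same route as the paper: the same base case $\phi_0=(f_0|_{C_0})^{-1}$, the same block decomposition $n=jb'_k+m$ defining $\phi_{k+1}(jb'_k+m)=f_{k+1}^{-1}(j)p_k+\phi_k(m)$, and the same verification of (i) by applying the inductive hypothesis to $\sigma^{f_{k+1}^{-1}(j)p_k}x\in Y_k$ together with the $Q_{k+1}$-relation $x_{f_{k+1}^{-1}(j)p_k}=T_*^{jb'_k}x_0$.
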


\begin{proof}
	Recall that  $f^{-1}_k:=(f_k|_{C_k})^{-1}$ for all $k\ge 0$.
	We prove by induction.
	For $k=0$, define a map $\phi_0: I_0\to J_0=C_0$ by $\phi_0(n)=f_0^{-1}(n)$ for $n\in I_0$.
	Then for any $x\in Y_0$, by $x\in Y_0\subset Q_0$, we have $x_{\phi_0(n)}=g_0(f_0^{-1}(n),x_0)=T_*^nx_0$ for all $n\in I_0$.
	Since $f_0^{-1}$ is a strictly increasing bijection, so is $\phi_0$.
	
	Suppose that we have defined $\phi_k$ as required for some $k\ge 0$.
	Since $\#I_k=b'_{k}$, then the translations $jb'_{k}+I_k$, $j\in\Z$ of $I_k$ are disjoint.
	So we define a map $\phi_{k+1}: I_{k+1}\to J_{k+1}$ by
	\[
	\phi_{k+1}(n)=f_{k+1}^{-1}(j)p_k+\phi_k(n-jb'_{k}),\,n\in jb'_{k}+I_k,\,j\in f_{k+1}(B_{k+1}).
	\]
	Since $\phi_k(I_k)=J_k\subset [0,p_{k})$, $\phi_k$ and $f^{-1}_{k+1}$ are strictly increasing bijections, and $J_{k+1}=p_kC_{k+1}+J_k$, we have $\phi_{k+1}$ is also a strictly increasing bijection and (ii) holds.
	For any $x\in Y_{k+1}$, by $x\in Y_{k+1}\subset Y_k$, we have $\sigma^{f_{k+1}^{-1}(j)p_k}x\in Y_k$ for all $j\in f_{k+1}(B_{k+1})$.
	So for all $j\in f_{k+1}(B_{k+1})$, by the induction hypothesis,	
	\begin{equation}\label{e:phi_1.1}
	x_{f_{k+1}^{-1}(j)p_k+\phi_k(n)}=T_*^nx_{f_{k+1}^{-1}(j)p_k}
	\end{equation}
	for all $n\in I_k$.
	By $x\in Y_{k+1}\subset Q_{k+1}$,
	\begin{equation}\label{e:phi_1.2}
	x_{f_{k+1}^{-1}(j)p_k}=g_{k+1}(f_{k+1}^{-1}(j),x_0)=T_*^{jb'_{k}}x_0
	\end{equation}
	for all $j\in f_{k+1}(B_{k+1})$.
	So by (\ref{e:phi_1.1}) and (\ref{e:phi_1.2}), we have $x_{\phi_{k+1}(n)}=T^nx_0$ for all $n\in I_{k+1}$.
	Therefore, it ends the proof.
\end{proof}

By the relationship between the orbits in $X$ and points in $Y$, we are able to estimate the entropy of $Y$.

\begin{proposition}\label{p:entropy}
	We have
	\[
	h_{\rm top}(Y,\sigma)\ge h_{\rm top}(X,T)\cdot \prod_{k=0}^{\infty}(1-2\epsilon_k).
	\]
\end{proposition}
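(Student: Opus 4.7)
The plan is to inject $(b'_k, \delta)$-separated sets of $(X, T^{-1})$ into $(p_k, \epsilon)$-separated sets of $(Y, \sigma)$ via the orbit-encoding of Lemmas~\ref{l:sample} and~\ref{l:phi_k}, then compare the scales using $b'_k/p_k \to \prod_k (b_k - 2)/n_k$. Throughout I assume (by replacing the $N$ of Lemma~\ref{l:AB} with $\max\{N, 2/\epsilon_k\}$ in the inductive construction of $n_k$, a cost-free strengthening) that $n_k \ge 2/\epsilon_k$ for all $k$.

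For the first step, fix $\delta > 0$ and $k \ge 0$. Since $s_n(X, T, \delta) = s_n(X, T^{-1}, \delta)$ (via the bijection $E \mapsto T^{n-1}E$), let $E \subset X$ be a maximal $(b'_k, \delta)$-separated set for $T^{-1}$, so $\#E = s_{b'_k}(X, T, \delta)$. For each $z \in E$, Lemma~\ref{l:sample} provides $x(z) \in \bigcap_k Y_k \subset Y$ with $x(z)_0 = z$. Given distinct $z, z' \in E$, pick $0 \le n < b'_k$ with $d_X(T^{-n}z, T^{-n}z') > \delta$ and set $m := -n \in I_k$; then Lemma~\ref{l:phi_k}(i) gives $x(z)_{\phi_k(m)} = T^m z$ and $x(z')_{\phi_k(m)} = T^m z'$, so these sequences differ by more than $\delta$ at the coordinate $\phi_k(m) \in J_k \subset [0, p_k)$. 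Using a product metric on $X_*^\Z$ whose coordinate-$0$ term dominates, the shifted pair $\sigma^{\phi_k(m)} x(z), \sigma^{\phi_k(m)} x(z')$ is at distance $> \epsilon$ for some $\epsilon = \epsilon(\delta) > 0$ independent of $k$, yielding $s_{p_k}(Y, \sigma, \epsilon) \ge s_{b'_k}(X, T, \delta)$.

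For the second step, $b'_k/p_k = \prod_{j=0}^k (b_j - 2)/n_j$ with $b_j = n_j - \#A_j$. Since $\#A_j < n_j \epsilon_j$ and $n_j \ge 2/\epsilon_j$, each factor is $\ge 1 - \epsilon_j - 2/n_j \ge 1 - 2\epsilon_j$, so $b'_k/p_k$ decreases to a limit $c \ge \prod_{j=0}^\infty (1 - 2\epsilon_j)$. The case $c = 0$ makes the theorem trivial, so assume $c > 0$. Combining with the first step,
\[
h_{\rm top}(Y, \sigma) \ge \limsup_k \frac{\log s_{p_k}(Y, \sigma, \epsilon)}{p_k} \ge c \cdot \limsup_k \frac{\log s_{b'_k}(X, T, \delta)}{b'_k},
\]
where the right inequality uses $s_{p_k}(Y, \sigma, \epsilon) \ge s_{b'_k}(X, T, \delta)$ together with $b'_k/p_k \to c > 0$. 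Since $b'_k \to \infty$, the rightmost $\limsup$ is at least $\liminf_n \frac{\log s_n(X, T, \delta)}{n}$. Letting $\delta \to 0$ and using the $\liminf$ form of $h_{\rm top}(X, T)$, I conclude $h_{\rm top}(Y, \sigma) \ge c \cdot h_{\rm top}(X, T) \ge h_{\rm top}(X, T) \prod_{j=0}^\infty (1 - 2\epsilon_j)$.

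The delicate point is that $s_{p_k}(Y, \sigma, \epsilon) \ge s_{b'_k}(X, T, \delta)$ holds only along the sparse subsequence $\{p_k\}$, whereas topological entropy is defined via all $n$. The workaround uses the two equivalent definitions asymmetrically: the $\limsup$ form for $(Y, \sigma)$ dominates any subsequence $\limsup$, while the $\liminf$ form for $(X, T)$ is dominated by the $\limsup$ along any subsequence going to infinity. These inequalities chain in the correct direction to close the argument.
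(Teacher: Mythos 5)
Your proof is correct and follows essentially the same route as the paper: inject separated sets of $X$ into $(p_k,\epsilon)$-separated sets of $Y$ via Lemma \ref{l:sample} and Lemma \ref{l:phi_k}(i), bound $\#I_k/p_k=b'_k/p_k$ below by $\prod(1-2\epsilon_j)$, and close the gap between the sparse subsequence $\{p_k\}$ and full entropy by playing the $\limsup$ form for $Y$ against the $\liminf$ form for $X$. Your explicit handling of the $T$ versus $T^{-1}$ orientation (the coordinates in $J_k$ encode the backward orbit) and of the subsequence issue is in fact slightly more careful than the paper's write-up, and your auxiliary hypothesis $n_k\ge 2/\epsilon_k$ is automatic from Lemma \ref{l:AB}(i) since $2\le\#A_k<n_k\epsilon_k$.
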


\begin{proof}
	Since $J_k\subset [0,p_k)$ for all $k\ge 0$, by (i) of Lemma \ref{l:phi_k}, the key is to estimate $\frac{\#I_k}{p_k}=\frac{\#J_k}{p_k}$.
	We prove the following inequality by induction:
	\begin{equation}\label{e:J_K}
	\#J_{K}\ge p_{K}\prod_{k=0}^{K}(1-2\epsilon_k),\,K\ge 0.
	\end{equation}
	For $K\ge 0$, $\#J_0=\#C_0=p_0-\#A_0-2>p_0(1-2\epsilon_0)$ by (i) of Lemma \ref{l:AB}.
	Suppose that (\ref{e:J_K}) holds for some $K\ge 0$.
	Recall that $J_{K+1}=p_{K}C_{K+1}+J_K$, then we have
	\[
	\begin{aligned}
	\#J_{K+1}=&\#C_{K+1}\cdot \#J_K\\
	\ge &(n_{K+1}-\#A_{K+1}-2)\cdot p_{K}\prod_{k=0}^{K}(1-2\epsilon_k)\\
	\ge &n_{K+1}(1-2\epsilon_{K+1})\cdot p_{K}\prod_{k=0}^{K}(1-2\epsilon_k)\quad\text{(by (i) of Lemma \ref{l:AB})}\\
	=&p_{K+1}\prod_{k=0}^{K+1}(1-2\epsilon_k)\quad\text{(by $p_{K+1}=n_{K+1}p_K$).}
	\end{aligned}
	\]
	So (\ref{e:J_K}) is proved.
	For any $\eta>0$, there is $\delta>0$ such that $d_{Y}(x,y)>\delta$ whenever $d_{X_*}(x_0,y_0)>\eta$ for any $x=(x_i)_{i\in\Z},y=(y_i)_{i\in\Z}\in Y$.
	By Lemma \ref{l:sample} and (i) of Lemma \ref{l:phi_k}, we have $s_{p_K}(Y,\sigma,\delta)\ge s_{\#I_K}(X,T,\eta)$ for all $K\ge 0$.
	Since $\#I_K=\#J_K$, we have
	\[
	\frac{\log s_{p_K}(Y,\sigma,\delta)}{p_K}>\frac{\log s_{p_K}(Y,\sigma,\delta)}{\#I_K}\cdot \prod_{k=0}^{K}(1-2\epsilon_k)\ge \frac{\log s_{\#I_K}(X,T,\eta)}{\#I_K}\cdot \prod_{k=0}^{K}(1-2\epsilon_k).
	\]
	Let $K\to\infty$, and by the arbitrariness of $\eta$, it ends the proof.
\end{proof}

By the above lemma, if $(X,T)$ has positive entropy, so does $(Y,\sigma)$ when we take $\se{\epsilon}k0$ satisfying
\begin{equation}\label{e:eps}
\prod_{k=0}^{\infty}(1-2\epsilon_k)>0.
\end{equation}

\section{Completely Li--Yorke chaotic systems with positive entropy}

In this section, we will give a completely  Li--Yorke chaotic system by the constructed system in Section \ref{s:proximal}.
We mainly show that each proper pair of $Y$ is not {two-sided asymptotic} for $\sigma$ where $Y$ is induced as in Section \ref{s:proximal} by some system $(X,T)$.
We divide the proper pairs of $Y$ into three cases, where the first two cases do not have any requirement for $X$, but the last one case does.

First, we show two typical cases for $Y$ induced by an arbitrary t.d.s. $(X,T)$.

\begin{lemma}\label{l:yfixed}
	Let $(Y,\sigma)$ be as (\ref{e:Y}) in Section \ref{s:proximal}.
	Then for any $x\neq *^\Z\in Y$, $(x,*^\Z)$ is not a {two-sided asymptotic} pair.
\end{lemma}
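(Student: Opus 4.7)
The plan is to translate the two-sided asymptotic condition into a statement about the support $\{n\in\Z:x_n\neq *\}$ and then use the nested block structure provided by Lemmas \ref{l:r_kofx} and \ref{l:forproximal} to show that this support is infinite.

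The first reduction is metric. Since $d_{X_*}(z,*)\ge \mathrm{diam}(X)+1$ for every $z\in X$, any metric on $X_*^\Z$ compatible with the product topology yields a constant $\delta>0$ such that the distance between $\sigma^n x$ and $*^\Z$ exceeds $\delta$ whenever $x_n\neq *$. Hence $(x,*^\Z)$ is two-sided asymptotic if and only if $x_n=*$ for all but finitely many $n\in\Z$, so it suffices to prove that $\{n\in\Z:x_n\neq *\}$ is infinite.

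To do this, pick any $m$ with $x_m\neq *$ and apply Lemma \ref{l:r_kofx} to obtain the unique sequence $\{r_k\}$ with $x\in\bigcap_{k\ge 0}\sigma^{-r_k}Y_k$. For each $k$, let $s_k=r_k+M_kp_k$ be the left endpoint of the unique level-$k$ block $[s_k,s_k+p_k)$ that contains $m$. Lemma \ref{l:forproximal}(ii) forces $x_{s_k}\neq *$, since otherwise the whole block, and in particular $x_m$, would equal $*$. The $Q_k$-condition built into $Y_k$ then gives
\[
x_{s_k+ip_{k-1}}=T_*^{b'_{k-1}f_k(i)}\,x_{s_k}\qquad(i\in B_k),
\]
and because $T_*$ preserves the decomposition $X_*=X\sqcup\{*\}$, each of these $\#B_k$ skeleton coordinates is non-$*$.

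Applying the same reasoning one level downward, each such skeleton coordinate $s_k+ip_{k-1}$ is itself the start of a level-$(k-1)$ block whose own skeleton is entirely non-$*$, and iterating through levels $k,k-1,\dots,0$ yields at least $\prod_{j=0}^{k}\#B_j$ distinct non-$*$ coordinates in $[s_k,s_k+p_k)$; distinctness follows at once from the mixed-radix identity $p_k=n_kp_{k-1}$ together with $B_j\subset\{0,1,\dots,n_j-1\}$. Since $\{0,n_j-1\}\subset B_j$ we have $\#B_j\ge 2$, so the count grows at least like $2^{k+1}\to\infty$. Letting $k\to\infty$ produces infinitely many $n\in\Z$ with $x_n\neq *$, which finishes the proof. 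The argument is essentially bookkeeping on top of Section \ref{s:proximal}; the only mild subtlety is confirming that the recursive unfolding produces genuinely distinct positions, which the mixed-radix picture handles cleanly.
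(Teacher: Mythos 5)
Your proof is correct and follows essentially the same route as the paper: reduce two-sided asymptoticity of $(x,*^\Z)$ to finiteness of $\{n\in\Z:x_n\neq *\}$, then use Lemma \ref{l:r_kofx}, part (ii) of Lemma \ref{l:forproximal} and the $Q_k$-condition to produce at least $\#B_k$ non-$*$ coordinates in the level-$k$ block containing $m$, with $\#B_k\to\infty$. Your additional recursive unfolding yielding $\prod_{j=0}^{k}\#B_j$ distinct positions is valid but not needed; the paper stops at the $\#B_k$ skeleton coordinates.
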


\begin{proof}
	Choose $m\in\Z$ such that $x_m\neq*$.
	By Lemma \ref{l:r_kofx}, there is a unique sequence $\se rk0$ such that $0\le r_k<p_k$, $x\in \sigma^{-r_k}Y_k$ and $r_{k+1}\equiv r_{k}\mod p_k$.
	
	Fix any $k\ge0 $.
	By (\ref{e:non*}), $m\in r_{k}+p_{k}\Z+p_{k-1}B_{k}+[0,p_{k-1})\cap\Z$.
	Let $m\in r_{k}+m_{k}p_{k}+i_kp_{k-1}+[0,p_{k-1})\cap\Z$ for some $m_{k}\in\Z$ and $i_k\in B_k$.
	Since $x_m\neq *$, by (ii) of Lemma \ref{l:forproximal}, we have $x_{r_{k}+m_{k}p_{k}+i_kp_{k-1}}\neq *$.
	Since $\sigma^{r_{k}}x\in Y_{k}$, $x_{r_{k}+m_{k}p_{k}+ip_{k-1}}\neq *$ for all $i\in B_{k}$.
	Since $\#B_{k}\to \infty$ as $k\to\infty$, $(x,*^\Z)$ is not a {two-sided asymptotic} pair.
\end{proof}

By Lemma \ref{l:r_kofx}, for any $x\in Y\setminus \{*^\Z\}$, there is a unique sequence $\se rk0$ with $0\le r_k<p_k$ such that $x\in \sigma^{-r_k}Y_k$ and $r_{k+1}\equiv r_{k}\mod p_k$ for all $k\ge 0$.
So we can define maps $r_k: Y\setminus \{*^\Z\}\to[0,p_k)\cap \Z$, $k\ge 0$ where $r_k(x)$ is as in Lemma \ref{l:r_kofx} for all $k\ge 0$.

\begin{lemma}\label{l:rxneqry}
	Let $(Y,\sigma)$ be as (\ref{e:Y}) in Section \ref{s:proximal}.
	Then for any $x\neq y\in Y\setminus\{*^\Z\}$, if there is some $K\ge 0$ such that $r_K(x)\neq r_K(y)$, then $(x,y)$ is not a {two-sided asymptotic} pair.
\end{lemma}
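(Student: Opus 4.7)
The plan is to exhibit infinitely many coordinates $j\in\Z$ at which exactly one of $x_j,y_j$ equals $*$; at each such $j$, $d_{X_*}(x_j,y_j)=\mathrm{diam}(X)+1$, and this yields $d_Y(\sigma^jx,\sigma^jy)>\delta$ for a uniform $\delta>0$, so by the remark after Definition \ref{d:pair} the pair $(x,y)$ is not two-sided asymptotic. I first choose $K$ to be the smallest index with $r_K(x)\ne r_K(y)$. The congruence $r_k\equiv r_{k-1}\pmod{p_{k-1}}$ from Lemma \ref{l:r_kofx} (using the convention $p_{-1}=1$) then forces $r_K(y)-r_K(x)=i_Kp_{K-1}$ for some integer $i_K$ with $0<|i_K|<n_K$; after possibly swapping $x$ and $y$, I assume $0<i_K<n_K$. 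If $i_K\le n_K/2+1$, Lemma \ref{l:AB}(iii) supplies some $a\in A_K$ with $a+i_K\in B_K$; the case $i_K>n_K/2+1$ is symmetric, applying (iii) to $n_K-i_K$.

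For each $n\in\Z$, put $j_n:=r_K(y)+np_K+ap_{K-1}$. From $a\in A_K$ and $\sigma^{r_K(y)}y\in Y_K\subset\bigcap_{m\in\Z}\sigma^{mp_K}P_K$ I get $y_{j_n}=*$. Rewriting $j_n=r_K(x)+np_K+(a+i_K)p_{K-1}$ with $a+i_K\in B_K$, and invoking $\sigma^{r_K(x)}x\in Y_K\subset Q_K$, I obtain
\[
x_{j_n}=g_K(a+i_K,x_{r_K(x)+np_K})=T_*^{b'_{K-1}f_K(a+i_K)}\,x_{r_K(x)+np_K},
\]
which is non-$*$ whenever the base value $x_{r_K(x)+np_K}$ is non-$*$, since $T_*$ fixes $*$. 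Consequently every $n$ in the set $S:=\{n\in\Z:x_{r_K(x)+np_K}\ne *\}$ produces a separating coordinate $j_n$.

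Finally I verify that $S$ is infinite. Since $x\ne *^{\Z}$, pick $m\in\Z$ with $x_m\ne *$; for each $L>K$, let $n_L^{*}$ be the unique integer with $m\in[r_L(x)+n_L^{*}p_L,r_L(x)+(n_L^{*}+1)p_L)$. The contrapositive of Lemma \ref{l:forproximal}(ii) yields $x_{r_L(x)+n_L^{*}p_L}\ne *$, and iterating the $Q_k$-relations from $k=L$ down to $k=K+1$ produces $\prod_{k=K+1}^{L}|B_k|$ distinct scale-$K$ base coordinates inside the scale-$L$ block of $m$ at which $x$ is non-$*$. Because $|B_{L+1}|\ge 2$ (the elements $0$ and $n_{L+1}-1$ both lie in $B_{L+1}$) and the scale-$(L+1)$ block strictly contains the scale-$L$ one, every step $L\mapsto L+1$ contributes new coordinates to $S$, which is therefore infinite. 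The main obstacle, in my view, is precisely this last step: the selection of $a$ via Lemma \ref{l:AB}(iii) essentially re-runs the uniqueness argument of Lemma \ref{l:r_kofx}, but one then needs careful multi-scale bookkeeping to confirm that the non-$*$ scale-$K$ base coordinates of $x$ occur infinitely often rather than being concentrated in a single block.
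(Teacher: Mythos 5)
Your argument is correct, and while it shares the paper's core separation mechanism it establishes infinitude by a genuinely different route. The shared core: Lemma \ref{l:AB}(iii) yields an offset $a\in A_K$ whose translate by the discrepancy $i_K$ lands in $B_K$, so the $P_K$-condition forces one point to equal $*$ at a coordinate where the $Q_K$-condition keeps the other inside $X$ (provided the relevant base coordinate is non-$*$), giving a gap of $\mathrm{diam}(X)+1$. The paper runs this at \emph{every} scale $k\ge K$ inside a single block around one fixed non-$*$ coordinate $m$, and gets infinitude from the quantitative part of (iii): at least $\sqrt{n_k/2}-1$ separating coordinates at scale $k$, with $n_k\to\infty$. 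You instead fix the single minimal scale $K$ --- a nice simplification, since minimality makes $r_K(y)-r_K(x)$ an exact multiple of $p_{K-1}$ and spares you the paper's case split on the remainder $b$ --- use (iii) only for nonemptiness, and derive infinitude from the recurrence of non-$*$ scale-$K$ base coordinates, counted through the $Q_k$-relations as $\prod_{k=K+1}^{L}\#B_k\ge 2^{L-K}\to\infty$; this multi-scale counting step is sound (the contrapositive of Lemma \ref{l:forproximal}(ii) seeds it, the congruences $r_k\equiv r_{k-1}\bmod p_{k-1}$ keep each descent inside the right coset for the next $Q_k$-relation, and distinct tuples $(i_L,\dots,i_{K+1})$ give distinct coordinates since $B_j\subset[0,n_j)$). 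Both proofs are valid: the paper's needs only one non-$*$ coordinate and no bookkeeping across scales, while yours isolates all the work at one scale at the cost of the extra counting argument for $S$. Note finally that your set of separating times need only be infinite as a subset of $\Z$, not unbounded in both directions, to contradict two-sided asymptoticity via the remark after Definition \ref{d:pair} --- which is exactly what you deliver.
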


\begin{proof}
	Fix $x=(x_i)_{i\in\Z}\neq y=(y_i)_{i\in\Z}$ with $r_K(x)\neq r_K(y)$ for some $K\ge 0$.
	By Lemma \ref{l:r_kofx}, $r_{k+1}(x)\equiv r_{k}(x)\mod p_k$ for all $k\ge 0$, which is same for $y$.
	So we have $r_{k}(x)\neq r_{k}(y)$ for all $k\ge K$.
	
	Fix $k\ge K$.
	Without loss of generality, we assume that $0\le r_k(x)<r_k(y)<p_k$.
	We will show that
	\begin{equation}\label{e:xneqy}
	\#\{n\in\Z: d_{X_*}(x_n,y_n)=d_{X_*}(X,*)\}\ge \sqrt{\frac{n_k}2}-1.
	\end{equation}
	Set $r_k(y)-r_k(x)=ap_{k-1}+b$ where $0\le a<n_k$ and $0< b\le p_{k-1}$.
	We divide $a$ into two cases.
	\begin{itemize}
		\item {\bf Case 1:} if $0\le a\le \frac{n_k}2$, choose $m\in\Z$ with $x_m\neq *$.
		By (\ref{e:non*}), we have $m\in r_k(x)+m_kp_k+i_kp_{k-1}+[0,p_{k-1})\cap \Z$ for some $m_k\in\Z$ and $i_k\in B_k$.
		By (ii) of Lemma \ref{l:forproximal}, we have $x_{r_k(x)+m_kp_k+i_kp_{k-1}}\neq *$.
		Since $\sigma^{r_{k}(x)+m_kp_k}x\in Q_k$ and $i_k\in B_k$, we have \begin{equation}\label{e:xneq*}
		x_{r_k(x)+m_kp_k+ip_{k-1}}\neq *\text{ for all }i\in B_k.
		\end{equation}
		Then for any $i\in (A_k+a+1)\cap B_k$,
		\[
		\begin{aligned}
		&r_k(x)+m_kp_k+ip_{k-1}\\
		=&r_k(y)-ap_{k-1}-b+m_kp_k+ip_{k-1}\\
		=&r_k(y)+m_kp_k+(i-a-1)p_{k-1}+p_{k-1}-b.
		\end{aligned}
		\]
		By $i-a-1\in A_k$, $p_{k-1}-b\in[0,p_{k-1})$ and (\ref{e:proximal}), $y_{r_k(x)+m_kp_k+ip_{k-1}}=*$.
		So by (iii) of Lemma \ref{l:AB} and (\ref{e:xneq*}),
		\[
		\#\{n\in\Z: d_{X_*}(x_n,y_n)=d_{X_*}(X,*)\}\ge\#((A_k+a+1)\cap B_k)\ge \sqrt{\frac{n_k}2}-1.
		\]
		
		\item {\bf Case 2:} if $\frac{n_k}2<a< n_k$, choose $m'\in\Z$ with $y_{m'}\neq *$.
		By (\ref{e:non*}), we have $m'\in r_k(y)+m'_kp_k+i'_kp_{k-1}+[0,p_{k-1})\cap \Z$ for some $m'_k\in\Z$ and $i'_k\in B_k$.
		By (ii) of Lemma \ref{l:forproximal}, we have $y_{r_k(y)+m'_kp_k+i'_kp_{k-1}}\neq *$.
		Since $\sigma^{r_{k}(y)+m'_kp_k}y\in Q_k$ and $i'_k\in B_k$, we have
		\begin{equation}\label{e:yneq*}
		y_{r_k(y)+m'_kp_k+ip_{k-1}}\neq *\text{ for all }i\in B_k.
		\end{equation}
		Then if $0<b<p_{k-1}$, for $i\in (n_k-a+A_k)\cap B_k$,
		\[
		\begin{aligned}
		&r_k(y)+m'_kp_k+ip_{k-1}\\
		=&r_k(x)+ap_{k-1}+b+m'_kp_k+ip_{k-1}\\
		=&r_k(x)+(m'_k+1)p_k+(i-(n_k-a))p_{k-1}+b.
		\end{aligned}
		\]
		By $i-(n_k-a)\in A_k$, $b\in[0,p_{k-1})$ and (\ref{e:proximal}), $x_{r_k(y)+m'_kp_k+ip_{k-1}}=*$.
		So by (iii) of Lemma \ref{l:AB} and (\ref{e:yneq*}),
		\[
		\#\{n\in\Z: d_{X_*}(x_n,y_n)=d_{X_*}(X,*)\}\ge\#((A_k+(n_k-a))\cap B_k)\ge \sqrt{\frac{n_k}2}-1.
		\]
		
		If $b=p_{k-1}$, then $a<n_k-1$ by $r_k(y)=r_k(x)+ap_{k-1}+b$.
		So for $i\in (n_k-a-1+A_k)\cap B_k$,
		\[
		r_k(y)+m'_kp_k+ip_{k-1}=r_k(x)+(m'_k+1)p_k+(i-(n_k-a-1))p_{k-1}.
		\]
		Since $i-(n_k-a-1)\in A_k$ and (\ref{e:proximal}), we have  $x_{r_k(y)+m'_kp_k+ip_{k-1}}=*$.
		So by (iii) of Lemma \ref{l:AB} and (\ref{e:yneq*}),
		\[
		\#\{n\in\Z: d_{X_*}(x_n,y_n)=d_{X_*}(X,*)\}\ge\#((A_k+(n_k-a-1))\cap B_k)\ge \sqrt{\frac{n_k}2}-1.
		\]
	\end{itemize}

	To sum up, (\ref{e:xneqy}) is proved.
	Since $n_k\to\infty$ as $k\to\infty$, it ends the proof.
\end{proof}

Finally, for the last case of $x\neq y$ with the same sequence $\se rk0$, we will show more about points in $Y$.
Recall that $C_k=B_k\setminus\{0,n_k-1\}$, $f_k|_{C_k}: C_k\to f_k(B_k)$ is a bijection and $f_k^{-1}: f_k(B_k)\to C_k$ is the inverse of the bijection $f_k|_{C_k}$.
\begin{lemma}\label{l:sameasx_0}
	Let $(Y,\sigma)$ be as (\ref{e:Y}) in Section \ref{s:proximal}, and $k\ge 0$.
	For any $x=(x_i)_{i\in\Z},y=(y_i)_{i\in\Z}\in Y_k$, we have
	\begin{itemize}
		\item[(i)] if $x_0=y_0$, then $x|_{[0,p_k)}=y|_{[0,p_k)}$;
		\item[(ii)] for any $i\in B_k$,  $(\sigma^{ip_{k-1}}x)|_{[0,p_{k-1})}=(\sigma^{f^{-1}_k(f_k(i))p_{k-1}})|_{[0,p_{k-1})}$.
	\end{itemize}
\end{lemma}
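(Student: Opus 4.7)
The plan is to prove (i) and (ii) simultaneously by induction on $k \ge 0$, using the $\sigma^{p_{k-1}}$-invariance of $Y_{k-1}$ to pass between levels. The key observation is that (i) at level $k$ reduces, block-by-block of length $p_{k-1}$, to (i) at level $k-1$ applied to shifts of $x$ and $y$; and that (ii) at level $k$ is essentially (i) at level $k-1$ applied to two shifts of a single point.

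For the base case $k=0$ (where $p_{-1}=1$), I would argue directly from the definitions: if $x,y\in Y_0\subset P_0\cap Q_0$ and $x_0=y_0$, then $x_i=*=y_i$ for $i\in A_0$, while for $i\in B_0$ both $x_i$ and $y_i$ equal $T_*^{f_0(i)}x_0=T_*^{f_0(i)}y_0$ by the definition of $g_0$, giving (i). For (ii) at $k=0$, note $x_i=T_*^{f_0(i)}x_0=T_*^{f_0(f_0^{-1}(f_0(i)))}x_0=x_{f_0^{-1}(f_0(i))}$ since $f_0^{-1}$ is defined on $f_0(B_0)$ and is a right inverse of $f_0$ there.

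For the inductive step at $k\ge 1$, the essential input is that $Y_{k-1}$ is $\sigma^{p_{k-1}}$-invariant; this follows easily from its definition (the part $Y_{k-2}$ is $\sigma^{p_{k-2}}$-invariant by a parallel induction, hence $\sigma^{p_{k-1}}$-invariant since $p_{k-2}\mid p_{k-1}$, and $\bigcap_{n\in\Z}\sigma^{np_{k-1}}(P_{k-1}\cap Q_{k-1})$ is $\sigma^{p_{k-1}}$-invariant by construction). Given $x,y\in Y_k$ with $x_0=y_0$, I first verify that $x_{jp_{k-1}}=y_{jp_{k-1}}$ for every $0\le j<n_k$: for $j\in A_k$ both sides are $*$ from $x,y\in P_k$, and for $j\in B_k$ both equal $g_k(j,x_0)=T_*^{b'_{k-1}f_k(j)}x_0$ from $x,y\in Q_k$. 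Then $\sigma^{jp_{k-1}}x,\sigma^{jp_{k-1}}y\in Y_{k-1}$ have the same $0$-th coordinate, so the induction hypothesis on (i) at level $k-1$ gives $x|_{[jp_{k-1},(j+1)p_{k-1})}=y|_{[jp_{k-1},(j+1)p_{k-1})}$; concatenating over $j=0,\dots,n_k-1$ yields (i) at level $k$. For (ii) at level $k$, fix $i\in B_k$ and set $i'=f_k^{-1}(f_k(i))\in C_k$. Both $\sigma^{ip_{k-1}}x$ and $\sigma^{i'p_{k-1}}x$ lie in $Y_{k-1}$, and their $0$-th coordinates coincide because $x\in Q_k$ forces them both to equal $T_*^{b'_{k-1}f_k(i)}x_0$ (using $f_k(i')=f_k(i)$). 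Applying (i) at level $k-1$ one more time finishes (ii).

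The only subtlety I anticipate is bookkeeping: verifying that the right invariance ($\sigma^{p_{k-1}}$, not $\sigma^{p_k}$) is in force when we apply shifts, and correctly distinguishing $B_k$ from $C_k$ so that $f_k^{-1}$ is applied only where it is defined. Once these are in order, nothing beyond the definitions of $P_k,Q_k,g_k,f_k$ and the induction hypothesis is needed, so the argument runs mechanically and no combinatorial input from Lemma \ref{l:AB} is required here.
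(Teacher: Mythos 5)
Your proposal is correct and follows essentially the same route as the paper: (i) is proved by induction on $k$, using membership in $P_k\cap Q_k$ to match the coordinates $x_{jp_{k-1}}=y_{jp_{k-1}}$ for $0\le j<n_k$ and then applying the level-$(k-1)$ case of (i) to the shifts $\sigma^{jp_{k-1}}x,\sigma^{jp_{k-1}}y\in Y_{k-1}$, while (ii) follows from $f_k(f_k^{-1}(f_k(i)))=f_k(i)$ forcing $x_{ip_{k-1}}=x_{f_k^{-1}(f_k(i))p_{k-1}}$ together with (i) at level $k-1$. The only cosmetic difference is that you package (ii) into the induction, whereas the paper derives it afterward from (i); both are fine.
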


\begin{proof}
	We prove (i) by induction.
	For $k=0$, let $x,y\in Y_0$.
	So we have $x_i=*=y_i$ for $i\in A_0$, and $x_i=T_*^{f_0(i)}x_0$ and $y_i=T_*^{f_0(i)}y_0$ for $i\in B_0$.
	So (i) holds for $k=0$.
	
	Suppose that (i) holds for some $k\ge 0$.
	For any $x,y\in Y_{k+1}$ with $x_0=y_0$, we have $x_{ip_k}=*=y_{ip_k}$ for $i\in A_{k+1}$, and $x_{ip_k}=T_*^{b'_kf_{k+1}(i)}x_0$ and $y_{ip_k}=T_*^{b'_kf_{k+1}(i)}y_0$ for $i\in B_{k+1}$.
	So $x_{ip_{k}}=y_{ip_{k}}$ for $0\le i<n_{k+1}$.
	Since $\sigma^{ip_{k}}x,\sigma^{ip_{k}}y\in Y_{k}$, by the induction hypothesis, we have $x|_{[ip_{k},(i+1)p_{k})}=y|_{[ip_{k},(i+1)p_{k})}$ for $0\le i<n_{k+1}$.
	So (i) holds for $k+1$.
	
	For any $x\in Y_{k}\subset Q_{k}$, noticing that $f_k\circ f_k^{-1}=Id_{f_k(B_k)}$ is the identity of $f_k(B_k)$, we have $x_{f_k^{-1}(f_k(i))p_{k-1}}=T^{b'_{k-1}f_k(i)}x_0=x_{ip_{k-1}}$ for $i\in B_k$.
	Then by (i), (ii) is proved.
\end{proof}

To prove the last case, we also need the following t.d.s.

\begin{theorem}\label{t:nonasym}
	\cite[Example 3.4]{LS99}
	There is a t.d.s. $(X,T)$ such that $h_{\rm top}(X,T)>0$ and all the proper pairs are not {two-sided asymptotic} pairs.
\end{theorem}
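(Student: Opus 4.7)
The plan is to realize such a system as a continuous automorphism of a compact abelian metric group. Working in this category has a decisive advantage: if $T:X\to X$ is a group automorphism, then $T^n x-T^n y=T^n(x-y)$, so a pair $(x,y)$ is positively (resp.\ negatively) asymptotic for $T$ if and only if $x-y$ is positively (resp.\ negatively) asymptotic to the identity $0$. Consequently, the assertion ``no proper pair is two-sided asymptotic'' is equivalent to triviality of the \emph{homoclinic group}
\[
\Delta(X,T):=\{z\in X:\lim_{n\to\pm\infty}T^{n}z=0\}.
\]
So the task reduces to producing a compact abelian group automorphism of positive topological entropy whose homoclinic group is $\{0\}$.

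I would instantiate this by the principal algebraic $\mathbb{Z}$-action $X_{f}=\widehat{\mathbb{Z}[u^{\pm 1}]/(f)}$ associated to a Laurent polynomial $f\in\mathbb{Z}[u^{\pm 1}]$, with $T=\sigma_{f}$ the automorphism dual to multiplication by $u$. Two facts from the theory of such actions are what one needs: (a) Yuzvinskii's formula gives $h_{\mathrm{top}}(X_{f},\sigma_{f})=\log M(f)$, where $M(f)$ is the Mahler measure of $f$; and (b) the homoclinic group $\Delta(X_{f})$ is dense in $X_{f}$ when $f$ is \emph{expansive} (no roots on the unit circle), but is trivial as soon as $f$ vanishes somewhere on the unit circle. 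The strategy is therefore to choose $f$ with some root off the unit circle (forcing $M(f)>1$ and hence positive entropy) and at least one root on the unit circle (forcing $\Delta(X_{f})=\{0\}$). A concrete admissible choice is
\[
f(u)=u^{4}-u^{2}-2=(u^{2}-2)(u^{2}+1),
\]
whose roots are $\pm\sqrt{2}$ (off the circle, contributing to entropy) and $\pm i$ (on the circle, killing the homoclinic group), giving $h_{\mathrm{top}}=\log M(f)=\log 2$.

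The steps, in order, would be: (i) check that $X_{f}$ is a compact metrizable abelian group and $\sigma_{f}$ a homeomorphism; (ii) compute $h_{\mathrm{top}}(X_{f},\sigma_{f})=\log 2$ via Yuzvinskii's formula; (iii) prove $\Delta(X_{f})=\{0\}$; and (iv) conclude, via the group-automorphism reduction in the first paragraph, that no proper pair in $X_{f}$ is two-sided asymptotic for $\sigma_{f}$. Step (iv) is immediate once (iii) is in hand, and (ii) is a direct citation of Yuzvinskii, so the real content lies in (iii).

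The main obstacle is step (iii). A homoclinic point $x\in X_{f}$ lifts, via Pontryagin duality, to a summable real sequence $(a_{n})\in\ell^{1}(\mathbb{Z},\mathbb{R})$ whose class modulo $(f)$ represents $x$; the exponential/polynomial decay built into homoclinicity for non-expansive systems has to be extracted carefully, but once one has $(a_{n})\in\ell^{1}$ the Fourier transform $\hat a$ is a continuous function on $\mathbb{T}$ that must vanish on $\{z\in\mathbb{T}:f(z)=0\}$. Since $f$ has a root on $\mathbb{T}$ and $\hat a$ is also constrained to be a multiple of $f$ modulo the ideal structure, a Wiener-type/support argument forces $\hat a\equiv 0$, whence $x=0$. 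This harmonic-analytic step is the technical heart of Lind--Schmidt \cite[Example 3.4]{LS99} and is where a fully detailed proof would require the most work.
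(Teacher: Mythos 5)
Your general framework is exactly the one behind the paper's citation to \cite[Example 3.4]{LS99}: for a compact abelian group automorphism, $(x,y)$ is two-sided asymptotic iff $x-y$ lies in the homoclinic group $\Delta(X,T)$, so one wants positive entropy together with $\Delta(X,T)=\{0\}$. The genuine gap is in your ``fact (b)'' and hence in your concrete choice of $f$. What Lind--Schmidt actually prove when $f$ vanishes somewhere on the unit circle is that the group of \emph{summable} ($\ell^1$) homoclinic points $\Delta^1(\alpha_f)$ is trivial; this does not imply that the full homoclinic group $\Delta(\alpha_f)$ is trivial. For your reducible choice $f(u)=(u^{2}-2)(u^{2}+1)$ the conclusion is in fact false: the quotient map $R/(f)\to R/(u^{2}-2)$ of $\mathbb{Z}[u^{\pm1}]$-modules dualizes to an equivariant embedding of $X_{u^{2}-2}$ as a closed invariant subgroup of $X_{f}$, and $X_{u^{2}-2}$ (the solenoid automorphism dual to multiplication by $\sqrt{2}$) is expansive with no roots on the unit circle, so its homoclinic group is dense and in particular nontrivial. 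Every nonzero homoclinic point of this subsystem is a nonzero homoclinic point of $X_{f}$, i.e.\ your example has many proper two-sided asymptotic pairs and does not satisfy the theorem. The root of $u^{2}+1$ on the circle kills nothing outside its own ideal-theoretic component; it does not ``interact'' with the entropy-producing factor $u^{2}-2$.

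The repair, which is what \cite[Example 3.4]{LS99} actually does, is to take $f$ \emph{irreducible} with roots both on and off the unit circle --- e.g.\ the Salem polynomial $f(u)=u^{4}-u^{3}-u^{2}-u+1$, whose companion matrix defines a quasihyperbolic automorphism of $\mathbb{T}^{4}$ with entropy $\log\lambda>0$, $\lambda\approx 1.7221$ the Salem root. For such an irreducible, non-hyperbolic, ergodic toral automorphism one can prove that the \emph{entire} homoclinic group is trivial (not merely its summable part): a homoclinic orbit must decay in both time directions, hence its lift has zero component in the central (unit-circle) eigendirections, and irreducibility forces any rational lattice point with vanishing central component to be zero. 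That irreducibility step is the actual technical heart of the example, and it is precisely the ingredient your write-up replaces with the false statement that a single root of $f$ on $\mathbb{T}$ suffices. The remainder of your outline (the reduction to $\Delta(X,T)=\{0\}$, and Yuzvinskii's formula for the entropy) is correct and matches the cited source.
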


If we take $(X,T)$ be as in Theorem \ref{t:nonasym} and construct $(Y,\sigma)$ as in Section \ref{s:proximal}, then we can show that all the proper pairs of $(Y,\sigma)$ are not {two-sided asymptotic} pairs.

\begin{proposition}\label{p:nonasym}
	Let $(X,T)$ be a t.d.s.  satisfying that all  proper pairs of $(X,T)$ are not {two-sided asymptotic} pairs, and define $(Y,\sigma)$ as (\ref{e:Y}) in Section \ref{s:proximal}.
	Then for any $x\neq y\in Y$, $(x,y)$ is not {two-sided asymptotic}.
\end{proposition}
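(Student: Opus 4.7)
My plan is to reduce to three cases via earlier lemmas and then carry out the substantive work in the remaining case using the orbit structure of $Y$. If one of $x, y$ equals $*^\Z$, I apply Lemma \ref{l:yfixed}. Otherwise Lemma \ref{l:r_kofx} attaches sequences $(r_k(x))$, $(r_k(y))$ to the two points; if they disagree at some index, Lemma \ref{l:rxneqry} applies. This leaves the main case: $x \neq y$, neither equals $*^\Z$, and $r_k(x) = r_k(y) =: r_k$ for every $k \geq 0$.

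For this main case I first argue that $D_k := \{j \in \Z : x_{r_k + jp_k} \neq y_{r_k + jp_k}\}$ is infinite for every $k$. Otherwise Lemma \ref{l:sameasx_0}(i) would force $x$ and $y$ to agree on every length-$p_{k_0}$ block indexed by $\Z \setminus D_{k_0}$, giving only finitely many differing coordinates; but for every $k'$ each differing level-$k'$ block contributes at least $\#J_{k'}$ differing coordinates---via Lemma \ref{l:forproximal}(ii) when one anchor is $*$, or via Lemma \ref{l:phi_k}(i) plus injectivity of $T$ when both anchors lie in $X$---and $\#J_{k'} \to \infty$, a contradiction.

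I then split into two subcases. If for some $k$ infinitely many $j \in D_k$ have exactly one anchor equal to $*$, Lemma \ref{l:forproximal}(ii) forces the $*$-anchored block to be pure $*^{p_k}$ while the other block realizes a full $X$-orbit by Lemma \ref{l:phi_k}(i); each such block then yields $\ge \#J_k$ coordinates at distance $\mathrm{diam}(X)+1$, giving infinitely many coordinates with a uniform distance. Otherwise, for every $k$ all but finitely many pairs $(u_j, v_j) := (x_{r_k+jp_k}, y_{r_k+jp_k})$ lie in $X \times X$ with $u_j \neq v_j$. For any such pair $(u,v)$, the hypothesis yields $\delta > 0$ and infinitely many bad $n \in \Z$ with $d_X(T^n u, T^n v) > \delta$; by Lemma \ref{l:phi_k}(i) applied to $\sigma^{r_k+jp_k}x$ and $\sigma^{r_k+jp_k}y$, any bad $n \in I_k$ produces a coordinate $r_k + jp_k + \phi_k(n)$ of distance $>\delta$. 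Since $I_k = (-b_k', 0]$ grows unboundedly and $\phi_K(n)$ is strictly increasing in $K$ via $\phi_{K+1}(n) = \phi_K(n) + f_{K+1}^{-1}(0) p_K$, even one bad $n \le 0$ produces infinitely many distinct bad coordinates.

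The main obstacle is the residual scenario where every proper anchor pair above is negatively asymptotic for $T$, so all of its bad $n$'s are positive and none lie in $I_k \subset (-\infty, 0]$. To handle this I would pick a proper anchor pair $(u', v') = (x_{-p_K}, y_{-p_K})$ for some $K$ (possible via infinitude of $D_K^X$ after a suitable shift) and, for each large $L$, use the level-$(K{+}L)$ block $[-p_{K+L}, 0)$ containing its position. Iterating $x_{-p_k} = T^{b_k'(3-b_{k+1})} x_{-p_{k+1}}$ and telescoping via $p_{k+1} = n_{k+1}p_k$ and $b_{k+1}' = b_k'(b_{k+1}-2)$ gives $x_{-p_{K+L}} = T^{b_{K+L}' - b_K'} u'$; hence by Lemma \ref{l:phi_k}(i) the orbit of $(u',v')$ is realized in $x,y$ at coordinates whose orbit indices for $u'$ cover $(-b_K',\, b_{K+L}' - b_K']$, a range extending to $+\infty$ as $L \to \infty$. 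Because $(u',v')$ is negatively asymptotic but still has infinitely many bad $n$'s, those bad $n$'s are bounded below and must extend to $+\infty$; hence infinitely many positive bad $n$'s are captured by this range, producing infinitely many bad coordinates of $x,y$ and completing the proof.
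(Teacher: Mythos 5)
Your reduction to the main case (neither point is $*^\Z$ and the two $r_k$-sequences coincide) matches the paper, and you correctly identify the central difficulty: Lemma \ref{l:phi_k}(i) only realizes the orbit of an anchor over the window $I_k\subset(-b_k',0]$, so a proper anchor pair all of whose bad times are positive is not directly captured. But your resolution of that difficulty has a genuine gap. Both the claim that ``one bad $n\le 0$ produces infinitely many distinct bad coordinates via $\phi_{K+1}(n)=\phi_K(n)+f_{K+1}^{-1}(0)p_K$'' and the residual-scenario argument built on the positions $-p_K,-p_{K+L}$ require the chosen position to be a level-$K$ anchor for \emph{every} $K$, i.e.\ $r_k=0$ (or at least $P\equiv r_k \bmod p_k$ for a single fixed $P$) for all $k$. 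This cannot in general be arranged ``after a suitable shift'': the compatible sequence $(r_k)$ with $r_{k+1}\equiv r_k\bmod p_k$ is an element of the odometer $\varprojlim \Z/p_k\Z$, and $r_k(\sigma^s x)\equiv r_k(x)-s\bmod p_k$, so a shift making all $r_k$ vanish exists only when $(r_k(x))$ comes from an integer. For a generic point of $Y$ it does not, and then $\sigma^{-p_{K+L}}x\notin Y_{K+L}$, the telescoping identity $x_{-p_k}=T^{b_k'(3-b_{k+1})}x_{-p_{k+1}}$ is unavailable, and your range $(-b_K',\,b_{K+L}'-b_K']$ of realized orbit indices is not established. (A secondary, fixable issue: the threshold $\delta$ varies with the anchor pair; you would need to observe that all proper anchor pairs lie on a single $T\times T$-orbit to get a uniform $\delta$.)

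The paper circumvents exactly this obstruction by a different case split. It fixes one coordinate $m$ with $x_m\neq y_m$, $x_m\neq *$, reads off its address $(i_k)_{k\ge0}$ with $i_k\in B_k$, and sets $S=\{k:\tau_k(i_k)\in\{0,1,b_k-2,b_k-1\}\}$. If $\#S=\infty$, the duplication built into $f_k$ (the two $f_k$-preimages of $f_k(i_k)$) together with Lemma \ref{l:sameasx_0}(ii) reproduces the \emph{values} $x_m,y_m$ at infinitely many coordinates $m^{(k)}$, so no orbit analysis is needed at all. If $\#S<\infty$, the coordinate is relocated to $m'$ with the same values and an always-interior address $i_k'\in C_k\setminus\{\tau_k^{-1}(1),\tau_k^{-1}(b_k-2)\}$; then $\alpha_k=\phi_k^{-1}(m'-(r_k+m_k'p_k))$ satisfies $I_k-\alpha_k\supset(-b_{k-1}',b_{k-1}']$, so the realized orbit window grows to cover \emph{all} of $\Z$ in both directions, and the hypothesis that proper pairs of $(X,T)$ are not two-sided asymptotic applies with no discussion of the sign of the bad times. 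To repair your argument you would need either this relocation device or some substitute that makes the realized window two-sidedly unbounded for a single fixed pair without assuming $(r_k)$ is integral.
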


\begin{proof}
	Fix $x=(x_i)_{i\in\Z}\neq y=(y_i)_{i\in\Z}\in Y$.
	By Lemma \ref{l:yfixed} and Lemma \ref{l:rxneqry}, we can assume that $x,y\in Y\setminus\{*^\Z\}$ and $x,y\in\bigcap_{k=0}\sigma^{-r_k}Y_k$ for the same sequence $\se rk0$ with $0\le r_k<p_k$ and $r_{k+1}\equiv r_{k}\mod p_k$ for all $k\ge 0$.
	Without loss of generality, we assume that $x_m\neq *$ and $x_m\neq y_m$ for some $m\in\Z$.
	
	By (\ref{e:non*}), $m\in r_k+p_k\Z+p_{k-1}B_{k}+[0,p_{k-1})\cap\Z$ for all $k\ge 0$.
	For each $k\ge 0$, there are unique $m_k\in\Z$ and unique $i_k\in B_k$ such that
	\[
	m\in r_k+m_kp_k+i_kp_{k-1}+[0,p_{k-1})\cap\Z\subset r_k+m_kp_k+[0,p_{k})\cap\Z.
	\]
	The integer $m_k$ is also unique such that $m\in r_k+m_kp_k+[0,p_{k})\cap\Z$.
	
	Next, we show by induction that for all $k\ge 0$, \begin{equation}\label{e:m}
	m=r_k+m_kp_k+\sum_{k'=0}^{k}i_{k'}p_{k'-1}.
	\end{equation}
	For $k=0$, $m=r_0+m_0p_0+i_0$ by $p_{-1}=1$.
	Suppose that (\ref{e:m}) holds for some $k\ge 0$.
	Since
	\[
		\begin{aligned}
		m\in & r_{k+1}+m_{k+1}p_{k+1}+i_{k+1}p_{k}+[0,p_{k})\cap\Z\\
		=&r_k+\left(m_{k+1}n_{k+1}+\frac{r_{k+1}-r_{k}}{p_{k}}+i_{k+1}\right)p_{k}+[0,p_{k})\cap\Z
		\end{aligned}
	\]
	where $\frac{r_{k+1}-r_{k}}{p_{k}}\in\Z$ by $r_{k+1}\equiv r_k\mod p_k$,
	we have
	\[
	r_{k+1}+m_{k+1}p_{k+1}+i_{k+1}p_{k}=r_k+m_kp_k
	\]
	by the uniqueness of $m_k$.
	So by the induction hypothesis,
	\[
		m=r_k+m_kp_k+\sum_{k'=0}^{k}i_{k'}p_{k'-1}=r_{k+1}+m_{k+1}p_{k+1}+\sum_{k'=0}^{k+1}i_{k'}p_{k'-1},
	\]
	that is, (\ref{e:m}) holds for $k+1$.
	
	Let $S=\{k\ge0:\tau_k(i_k)\in\{0,1,b_k-2,b_k-1\}\}$.
	The rest of proof is divided into two cases.
	
	{\bf Case 1:} assume that $\#S=\infty$.
	In this case, we will show that $x_m$ and $y_m$ appear infinitely many times at the same coordinate of $x$ and $y$, respectively.
	Notice that for $i\in \tau_k^{-1}\{0,1,b_k-2,b_k-1\}$, $f_k(i)$ has two $f_k$-preimages.
	For $k\in S$, let $i'_k\neq i_k$ be a $f_k$-preimage of $f_k(i_k)$.
	By (ii) of Lemma \ref{l:sameasx_0},
	\[
	(\sigma^{r_k+m_kp_k+i_kp_{k-1}}x)|_{[0,p_{k-1})}=(\sigma^{r_k+m_kp_k+i'_kp_{k-1}}x)|_{[0,p_{k-1})}.
	\]
	In particular, $x_m=x_{m^{(k)}}$ where $m^{(k)}=m+(i'_k-i_k)p_{k-1}$.
	Similarly, $y_m=y_{m^{(k)}}$.
	Since $\#S=\infty$ and $|m^{(k)}-m|=|i'_k-i_k|p_{k-1}\ge p_{k-1}\to\infty$ as $k\to\infty$, $(x,y)$ is not {two-sided asymptotic}.
	
	{\bf Case 2:}
	assume that $\#S<\infty$.
	In this case, we will show that $x$ and $y$ contain the $\Z$-orbit of $x_m$ and $y_m$, respectively.
	First, we find another integer $m'$ instead of $m$.
	For $k\in S$, since $f_k|_{C_k}: C_k\to B_k$ is a bijection, there is $i'_k\in C_k$ such that $f_k(i'_k)=f_k(i_k)$.
	For $k\notin S$, set $i'_k=i_k\in C_k\setminus\{\tau_k^{-1}(1),\tau_k^{-1}(b_k-2)\}$.
	
	Let
	\[
		m':=m+\sum_{k\in S}(i'_k-i_k)p_{k-1}=m+\sum_{k=0}^{\infty}(i'_k-i_k)p_{k-1}.
	\]
	So for $K\ge 0$,
	\[
	\begin{aligned}
	m'=&m+\sum_{k=0}^{\infty}(i'_k-i_k)p_{k-1}\\
	=&r_K+m_Kp_K+\sum_{k=K+1}^{\infty}(i'_k-i_k)p_{k-1}+\sum_{k=0}^{K}i'_kp_{k-1}\quad\text{(by (\ref{e:m}))}\\
	\in & r_K+p_K\Z+i'_Kp_{K-1}+[0,p_{K-1})\cap\Z
	\end{aligned}
	\]
	where the last statement holds since $i'_k<n_k$ for all $k$ and then the sum $\sum_{k=0}^{K-1}i'_kp_{k-1}<p_{K-1}$.
	For $K\ge 0$, set
	\begin{equation}\label{e:m'_k}
	m'_K=m_K+\sum_{k=K+1}^{\infty}(i'_k-i_k)\frac{p_{k-1}}{p_{K}}.
	\end{equation}
	So
	\begin{equation}
		m'=r_K+m'_Kp_K+\sum_{k=0}^{K}i'_kp_{k-1}\in r_K+m'_Kp_K+i'_Kp_{K-1}+[0,p_{K-1})\cap\Z.
	\end{equation}
	
	We claim that $x_m=x_{m'}$ and $y_{m}=y_{m'}$.
	
	Let $\hat{K}=\max S$ and $x^{(0)}=\sigma^{r_{\hat{K}+1}+m_{\hat{K}+1}p_{\hat{K}+1}+i_{\hat{K}+1}p_{\hat{K}}}x\in Y_{\hat{K}}$.
	So $i_{k}=i'_k$ for $k>\hat{K}$.
	Since $x^{(0)}\in Y_{\hat{K}}$, by (ii) of Lemma \ref{l:sameasx_0},
	\begin{equation}\label{e:xm=xm'1}
	(\sigma^{i_{\hat{K}}p_{\hat{K}-1}}x^{(0)})|_{[0,p_{\hat{K}-1})}=(\sigma^{i'_{\hat{K}}p_{\hat{K}-1}}x^{(0)})|_{[0,p_{\hat{K}-1})}.
	\end{equation}
	Let $x^{(1)}=\sigma^{i'_{\hat{K}}p_{\hat{K}-1}}x^{(0)}\in Y_{\hat{K}-1}$.
	Again, by (ii) of Lemma \ref{l:sameasx_0},
	\begin{equation}\label{e:xm=xm'2}
	(\sigma^{i_{\hat{K}-1}p_{\hat{K}-2}}x^{(1)})|_{[0,p_{\hat{K}-2})}=(\sigma^{i'_{\hat{K}-1}p_{\hat{K}-2}}x^{(1)})|_{[0,p_{\hat{K}-2})}.
	\end{equation}
	So
	\[
	\begin{aligned}
	&(\sigma^{i_{\hat{K}}p_{\hat{K}-1}+i_{\hat{K}-1}p_{\hat{K}-2}}x^{(0)})|_{[0,p_{\hat{K}-2})}\\
	=&(\sigma^{i_{\hat{K}}p_{\hat{K}-1}}x^{(0)})|_{[i_{\hat{K}-1}p_{\hat{K}-2},(i_{\hat{K}-1}+1)p_{\hat{K}-2})}\\
	=&x^{(1)}|_{[i_{\hat{K}-1}p_{\hat{K}-2},(i_{\hat{K}-1}+1)p_{\hat{K}-2})}\quad\text{(by (\ref{e:xm=xm'1}))}\\
	=&(\sigma^{i_{\hat{K}-1}p_{\hat{K}-2}}x^{(1)})|_{[0,p_{\hat{K}-2})}\\
	=&(\sigma^{i'_{\hat{K}}p_{\hat{K}-1}+i'_{\hat{K}-1}p_{\hat{K}-2}}x^{(0)})|_{[0,p_{\hat{K}-2})}\quad\text{(by (\ref{e:xm=xm'2})).}
	\end{aligned}
	\]
	
	Similarly, for $1\le k\le \hat{K}$, we have $x^{(k)}=\sigma^{i'_{\hat{K}-k+1}p_{\hat{K}-k}}x^{(k-1)}\in Y_{\hat{K}-k}$ and
	\[
	(\sigma^{\sum_{k'=\hat{K}-k}^{\hat{K}}i_{k'}p_{k'-1}}x^{(k)})|_{[0,p_{\hat{K}-k-1})}=(\sigma^{\sum_{k'=\hat{K}-k}^{\hat{K}}i'_{k'}p_{k'-1}}x^{(k)})|_{[0,p_{\hat{K}-k-1})}.
	\]
	
	Therefore, we have $(\sigma^{\sum_{k'=0}^{\hat{K}}i_{k'}p_{k'-1}}x^{(0)})_0=(\sigma^{\sum_{k'=0}^{\hat{K}}i'_{k'}p_{k'-1}}x^{(0)})_0$, that is, $x_m=x_{m'}$.
	Replacing $x$ by $y$, we also have $y_{m}=y_{m'}$ by the same arguments.
	
	Now we have found $m'$ instead of $m$ such that $x_{m'}\neq *$, $x_{m'}\neq y_{m'}$ and for all $k\ge 0$,
	$m'\in r_{k}+m'_{k}p_{k}+i'_{k}p_{k-1}+[0,p_{k-1})$ where $i'_{k}\in C_k$.
	So we have $m'-(r_k+m'_kp_k)\in J_k$ for all $k\ge 0$, noticing that for any $k\ge k'\ge0$,
	\[
	m'-(r_k+m'_kp_k)=\sum_{k''=0}^{k}i'_{k''}p_{k''-1}\in p_{k'}\Z+i'_{k'}p_{k'-1}+[0,p_{k'-1})\cap\Z.
	\]
	
	Finally, we will show that points $x$ and $y$ contain the $\Z$-orbit of $x_{m'}$ and $y_{m'}$ at the same coordinate, respectively.
	To show this similarity, fix $z=(z_i)_{i\in\Z}\in\{x,y\}$.
	
	For $k\ge 0$, let $\alpha_k=\phi_k^{-1}(m'-(r_k+m'_kp_k))$.
	Since $\sigma^{r_k+m'_kp_k}z\in Y_k$, by (i) of Lemma \ref{l:phi_k}, we have $z_{r_k+m'_kp_k+\phi_k(n)}=T_*^nz_{r_k+m'_kp_k}$ for all $n\in I_k$.
	In particular,
	\[
	z_{m'}=z_{r_k+m'_kp_k+\phi_k(\alpha_k)}=T_*^{\alpha_k}z_{r_k+m'_kp_k}
	\]
	So for all $k\ge 0$,
	\begin{equation}\label{e:orbit}
	z_{r_k+m'_kp_k+\phi_k(n+\alpha_k)}=T_*^{n}z_{m'}\text{ for all }n\in I_k-\alpha_k.
	\end{equation}
	
	We will show that for any integer $N\ge 1$, there is $k\ge 0$ such that
	\begin{equation}\label{e:Zorbit}
	I_k-\alpha_k\supset (-N,N)\cap\Z.
	\end{equation}
	Fix integer $N\ge 1$.
	Choose $k>\max S$ such that $\#I_{k-1}>N$.
	Since $k>\max S$, we have $i'_k=i_k\in C_k\setminus\{\tau_k^{-1}(1),\tau_k^{-1}(b_k-2)\}$.
	So $\tau_{k}(i'_k)\in \{2,3,\dots,b_k-3\}$ and then $f_k(i'_k)\in\{-(b_k-4),\dots,-2,-1\}$.
	By (ii) of Lemma \ref{l:phi_k},
	$\alpha_k\in f_k(i'_k)b'_{k-1}+I_{k-1}$ since $m'-(r_k+m'_kp_k)\in i'_kp_{k-1}+J_{k-1}$.
	Since $I_k=f_k(B_k)b'_{k-1}+I_{k-1}$ and $f_k(B_k)=\{-(b_k-3),\dots,-2,-1,0\}$, we have
	\[
		I_k-\alpha_k\supset I_{k-1}\cup(b'_{k-1}+I_{k-1})=(-b'_{k-1},b'_{k-1}]\cap\Z.
	\]
	And by $b'_{k-1}=\#I_{k-1}>N$, (\ref{e:Zorbit}) is proved.
	
	Now we have proved that for any $z\in \{x,y\}$, (\ref{e:orbit}) holds, and for any $N\ge 1$, there is $k\ge 0$ such that (\ref{e:Zorbit}) holds.
	Therefore if $y_{m'}=*$, $(x,y)$ is not {two-sided asymptotic} by $x_{m'}\neq *$.
	In the case of $y_{m'}\neq *$, since all the proper pairs of $(X,T)$ are not {two-sided asymptotic}, so is $(x,y)$.
\end{proof}

By Proposition \ref{p:nonasym}, the existence of homeomorphisms with positive topological entropy such that all the proper pairs of $(X,T)$ are not {two-sided asymptotic} implies the existence of completely  Li--Yorke chaotic homeomorphisms with positive topological entropy.
So by Theorem \ref{t:nonasym}, we are ready to prove Theorem \ref{t:main}.

\begin{proof}[Proof of Theorem \ref{t:main}]
	By Theorem \ref{t:nonasym}, choose a t.d.s. $(X,T)$  such that $h_{\rm top}(X,T)>0$ and all the proper pairs of $X$ are not {two-sided asymptotic}.
	And then choose a decreasing sequence $\se{\epsilon}k0$ such that $0<\epsilon_k<1/2$ for $k\ge 0$, and
	\[
		\prod_{k=0}^{\infty}(1-2\epsilon_k)>0.
	\]
	Let $(Y,\sigma)$ be defined as (\ref{e:Y}) in Section \ref{s:proximal}.
	By Proposition \ref{p:entropy}, $h_{\rm top}(Y,\sigma)>0$.
	By Proposition \ref{p:proximal} and Proposition \ref{p:nonasym}, all the proper pairs are either positively or negatively scrambled pairs, which ends the proof.
\end{proof}

\section*{Acknowledgements}
We would like to thank Professor Guohua Zhang for many useful comments, and Xulei Wang for pointing out some typos in the earlier manuscript.
This research is supported by NNSF of China (Nos. 12371196 and 12031019) and China Post-doctoral Grant (Nos. BX20230347 and 2023M743395).

\end{document}